\newcommand{\isom}{\cong} 
\newcommand{\union}{\cup}
\newcommand{\ints}{\cap}
\DeclareMathOperator{\End}{End}
\newcommand{\SO}{\mathrm{SO}}
\newcommand{\Spin}{\mathrm{Spin}}
\newcommand{\CC}{\mathbb{C}}
\newcommand{\OO}{\mathbb{O}}
\newcommand{\RR}{\mathbb{R}}
\newcommand{\ZZ}{\mathbb{Z}}
\newcommand{\cA}{\mathcal{A}}
\newcommand{\cB}{\mathcal{B}}
\newcommand{\cK}{\mathcal{K}}
\newcommand{\cN}{\mathcal{N}}
\newcommand{\del}{\partial}
\newcommand{\vol}{\mathrm{Vol}}
\newcommand{\id}{\mathrm{Id}}
\newcommand{\grad}{\mathrm{grad}}
\newcommand{\hess}{\mathrm{Hess}}
\newcommand{\tr}{\mathrm{tr}}
\newcommand{\Cay}{\mathcal{C}ay}
\newcommand{\FFJ}{\mathfrak{J}}
\newcommand{\FFX}{\mathfrak{X}}
\newcommand{\FFY}{\mathfrak{Y}}
\newcommand{\matii}[2]{\left(\begin{array}{cc} #1\\#2\end{array}\right)}
\newcommand{\matiii}[3]{\left(\begin{array}{ccc} #1\\#2\\#3\end{array}\right)}
\newcommand{\mativ}[4]{\left(\begin{array}{cccc} #1\\#2\\#3\\#4\end{array}\right)}
\newcommand{\Addresses}{{
  \bigskip

\textsc{Department of Mathematics, University of Michigan, Ann Arbor, MI, 48109.} \par\nopagebreak 

\textit{E-mail address}: \texttt{\email{ruanyp@umich.edu}}.

}}
\newcommand*{\email}[1]{%
    \normalsize\href{mailto:#1}{#1}\par
    }
\newtheorem{thm}{Theorem}[section]
\newtheorem{prop}[thm]{Proposition}
\newtheorem{lem}[thm]{Lemma}
\newtheorem{cor}[thm]{Corollary}
\newtheorem{fact}[thm]{Fact}
\theoremstyle{definition}
\newtheorem{dfn}[thm]{Definition}
\newtheorem*{rmk}{Remark}
\def\namedlabel#1#2{\begingroup
    #2%
    \def\@currentlabel{#2}%
    \phantomsection\label{#1}\endgroup
}
\title{The Cayley hyperbolic space and volume entropy rigidity}
\author{Yuping Ruan}
\date{}
\begin{document}

\maketitle
\begin{abstract}
Let $M$ be a Riemannian manifold with dimension greater or equal to $3$ which admits a complete, finite-volume Riemannian metric $g_0$ locally isometric to a rank-1 symmetric space of non-compact type. The volume entropy rigidity theorem \cite[Theor\'em\`e principal]{BCG1} asserts that $g_0$ minimizes a normalized volume growth entropy among all complete, finite-volume, Riemannian metric on $M$. We will repair a gap in the proof when $g_0$ is locally isometric to the Cayley hyperbolic space.
\end{abstract}
\tableofcontents
\section{Introduction}\label{s0}
Let $h(g)$ be the \emph{volume growth entropy} of a closed $n$-dimensional Riemannian manifold $(M,g)$, i.e.
$$h(g)=\lim_{r\to \infty}\frac{1}{r}\ln(\vol (B(x,r))),$$
where $B(x,r)$ denotes the ball of radius $r$ centered at some point $x$ in the universal cover of $(M,g)$. When $M$ is not compact but has finite volume, we define $h(g)$ instead by
$$h(g)=\inf\left\{ s\geq 0\left|\exists K>0\mathrm{~s.t.~}\int_{\widetilde M}e^{-s d(x,z)}dg(z)<K\right.\right\},$$ 
where $dg$ denotes the volume form on the universal cover $\widetilde M$ of $M$ induced by the metric $g$. We introduce the \emph{normalized entropy} 
$$\mathrm{ent}(g)=h(g)^n\vol(M,g)$$
so that it remains invariant under scaling. In \cite{BCG1} Besson-Courtois-Gallot proved the following theorem.
\begin{thm}[Besson-Courtois-Gallot, Theor\'em\`e principal,\cite{BCG1}]\label{main_thm}

Let $M$ be a Riemannian manifold of dimension $n$ which admits a complete, finite-volume Riemannian metric $g_0$ locally isometric to a rank 1 symmetric space of non-compact type with dimension at least $3$. Let $g$ be another complete, finite-volume Riemannian metric on $M$. Assuming that $(\widetilde M,g)$, the universal cover of $(M,g)$, has bounded geometry (see the remark below), then
\begin{align}\label{ent_rigid_ineq}
\mathrm{ent}(g)\geq\mathrm{ent}(g_0)
\end{align}
with equality achieved if and only if the map $\id:(M,g)\to (M,g_0)$ is homotopic to an isometry.

More generally, if $(N,g)$ is any $n$-dimensional finite-volume Riemannian manifold of bounded geometry, and if $f:N\to M$ is any proper, smooth map with non-zero degree, then
$$\mathrm{ent}(N,g)\geq|\deg f|\mathrm{ent}(M,g_0)$$
with equality achieved if and only if $f$ is homotopic to a Riemannian covering.
\end{thm}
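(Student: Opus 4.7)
The plan is to follow the natural map (barycenter) method of Besson--Courtois--Gallot. Let $\tilde f:\tilde N\to \tilde M$ be a lift of $f$ to the universal covers, equivariant for the $\pi_1$-actions. For every parameter $s>h(g)$, I would construct a smooth equivariant map $F_s:\tilde N\to \tilde M$ homotopic to $\tilde f$, and control its Jacobian pointwise. Concretely, for $y\in\tilde N$ form the finite measure $d\mu_y^s(z)=e^{-sd_g(y,z)}dg(z)$ on $\tilde N$, push it forward by $\tilde f$ to a measure on $\tilde M$, and define $F_s(y)$ to be the unique critical point in $\tilde M$ of the Busemann-type potential
\[
 \Phi_y^s(x)\;=\;\int_{\tilde N} e^{-sd_g(y,z)}\,\rho\bigl(\tilde f(z),x\bigr)\,dg(z),
\]
where $\rho$ is a suitable convex function asymptotic to the distance in $\tilde M$ (so that at infinity this agrees with the Patterson--Sullivan formulation: a measure on $\partial\tilde N$ pushed to $\partial\tilde M$ whose barycenter with respect to Busemann functions is $F_s(y)$). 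Equivariance and smoothness are automatic from the implicit function theorem applied to $\nabla_x\Phi_y^s=0$, given strict convexity of $\rho$ in rank $1$.

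Next, differentiating the defining equation implicitly and applying the arithmetic--geometric inequality to the resulting expression for $dF_s(y)$, I would obtain a pointwise Jacobian bound of the form
\[
 |\mathrm{Jac}\,F_s(y)|\;\leq\;\Bigl(\tfrac{s}{h(g_0)}\Bigr)^n\,C_n,
\]
with $C_n$ an explicit constant depending only on the local type of $g_0$. This bound rests on a sharp algebraic inequality comparing the symmetric positive semidefinite endomorphisms
\[
 H_y\;=\;\int_{\partial\tilde M}\nabla^2 B(\cdot,\theta)\,d\bar\mu_y(\theta),\qquad
 K_y\;=\;\int_{\partial\tilde M} dB(\cdot,\theta)\otimes dB(\cdot,\theta)\,d\bar\mu_y(\theta),
\]
where $\bar\mu_y$ is the normalized pushforward on $\partial\tilde M$ and $B$ is the Busemann function. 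Integrating the pointwise bound against $|\mathrm{Jac}\,F_s|\,dg$ over a fundamental domain of $N$, together with the degree inequality $|\deg f|\,\vol(M,g_0)\leq\int_N|\mathrm{Jac}\,F_s|\,dg$, and finally letting $s\searrow h(g)$ yields $\mathrm{ent}(N,g)\geq |\deg f|\,\mathrm{ent}(M,g_0)$. The bounded-geometry hypothesis is what allows one to push the asymptotic argument through in the non-compact, finite-volume setting (convergence of $F_s$ and integrability of the Jacobian).

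For the equality case, saturation of the pointwise bound forces $\bar\mu_y$ to have a very restricted form (essentially the visual measure of a single isometry) on $\partial\tilde M$, and this rigidifies $F_s$ to a $\pi_1$-equivariant isometry; standard arguments then promote $F_s$ to the desired Riemannian covering homotopic to $f$.

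The hard part, and the focus of this paper, is the algebraic Jacobian inequality in the \emph{Cayley hyperbolic} case. In the real, complex and quaternionic settings, the comparison between $H_y$ and $K_y$ reduces to a Cauchy--Schwarz-type trace inequality that is handled uniformly via the explicit structure of $\nabla^2 B$ and the associated Kähler/quaternion-Kähler parallel forms. For $\mathbb{OH}^2$ the isotropy representation on a tangent space decomposes into a $1$-dimensional line along a unit vector $u$, a $7$-dimensional "octonionic-imaginary" piece, and an $8$-dimensional complement, and the absence of an associative multiplication means the standard trick of diagonalising $H_y$ in an octonion-linear frame is unavailable. I expect the paper to (i) write down the correct curvature decomposition of $\mathbb{OH}^2$ with respect to an arbitrary unit vector, (ii) establish the sharp trace/determinant inequality between $H_y$ and $K_y$ in this decomposition, and (iii) check that the equality case of this inequality still characterises visual measures, thereby repairing the gap in \cite{BCG1} in a manner compatible with the rest of the BCG scheme.
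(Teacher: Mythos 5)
Your outline of the Besson--Courtois--Gallot natural-map scheme is accurate and matches the paper's Section~\ref{s2}: construct $F_s$ as the barycenter of a push-forward measure against Busemann functions, differentiate implicitly to get \eqref{DF_s}, bound $|\mathrm{Jac}\,\widetilde F_s|$ via \eqref{det_est}, integrate, and let $s\searrow h(g)$. You also correctly locate the difficulty: the lack of an associative octonionic multiplication kills the global $J_k$ maps used in \cite[page 751]{BCG1}. But at the point where the actual content begins --- ``establish the sharp trace/determinant inequality between $H_y$ and $K_y$ in this decomposition'' --- your proposal stops describing a proof and merely restates the problem. The inequality \eqref{simplification} is precisely what fails to follow from the old argument, and ``I expect the paper to establish'' it is not a step you can take; the entire novelty of the paper is \emph{how}.

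Concretely, what is missing from your plan: (i) the observation (Lemma~\ref{octonion_struct}) that although the $J_k$ do not exist globally, they do exist \emph{on each Cayley line}, which yields \hyperlink{PP1}{(P1')} and the weak replacement \hyperlink{PP2}{(P2')}; (ii) the resulting block form \eqref{matrix_formulation} in which $\widehat H$ is block-diagonal with at most two eigenvalues $\lambda,\mu$ and $H=\bigl(\begin{smallmatrix}A&C\\C^*&B\end{smallmatrix}\bigr)$ with $\tr A=\lambda$, $\tr B=\mu$; and (iii) the key new device of interpolating the ``troublesome'' off-diagonal block by $U_\alpha=\alpha\bigl(\begin{smallmatrix}0&C\\C^*&0\end{smallmatrix}\bigr)$ and showing (Lemma~\ref{key_lemma}) there is an $\alpha\in[1-\sqrt{2/3},\,1/4]$ making \emph{both} \eqref{new_part_1} and \eqref{new_part_2} hold simultaneously, after which log-concavity \eqref{new_trick} closes the argument exactly as in BCG's $\alpha=0$ case. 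The proof of Lemma~\ref{key_lemma} itself splits into a ``good'' case ($2\lambda_1\leq\lambda$, $2\mu_1\leq\mu$, where $\alpha=0$ works via Lemma~\ref{easy_lin_alg}) and a delicate ``bad'' case requiring the reverse AM--GM estimate of Lemma~\ref{reverse_AM_GM_ineq_1} and the concavity analysis of Lemma~\ref{F_Gmax}. Without the $U_\alpha$ interpolation idea and the two-case split --- none of which is standard or predictable from the real/complex/quaternionic proofs --- your proposal does not yield the Jacobian bound \eqref{coarse_det_est}, let alone the sharpened \eqref{sharp_det_est} needed for the equality analysis in Corollary~\ref{key_cor}.
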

\begin{rmk}
\begin{enumerate}
\item[(1).] A Riemannian manifold $X$ has \emph{bounded geometry} if for any $\epsilon>0$, there exists some constant $C>0$ such that any geodesic ball of radius $\epsilon$ has volume greater or equal to $C$;
\item[(2).] The above theorem is slightly different from \cite[Theor\`em\'e principal]{BCG1} because Besson-Courtois-Gallot assumed that $M$ and $N$ are compact. The non-compact case requires a few extra arguments and was proved by Boland-Connell-Souto in \cite[Theorem 1.3]{BCS}. Also, P. Storm removed the bounded geometry condition in \cite[Theorem 1.1]{Storm}.
\end{enumerate}
\end{rmk}
We noticed a technical gap in the proof of the above theorem when $(M,g_0)$ is locally isometric to the Cayley hyperbolic space $\OO\mathbf{H}^2$. The gap appears in the assertion (see \cite[page 751]{BCG1}) which claims that there exist fiber-wise linear maps $J_k:T\widetilde M\to T\widetilde M$, $k=1,...,7$ such that the Hessian of the \emph{Busemann function} satisfies the following formula
\begin{align*}&\hess B_\theta|_x(v,w) \\
=&g_0(v,w)-dB_\theta(v)dB_\theta(w)+\sum_{k=1}^7dB_\theta(J_kv)dB_\theta(J_kw), ~~x\in M, v,w\in T_x\widetilde M,
\end{align*}
where $\widetilde M$ denotes the universal cover of $(M,g_0)$ and $B_\theta$ is the Busemann function with respect to the geodesic ray starting at a fixed point towards a point $\theta$ in the visual boundary $\del_\infty \widetilde M$ of $\widetilde M$.
We observe that such maps $J_k$ \textbf{do not} exist in the Cayley hyperbolic case due to non-associativity of octonionic multiplication, which makes this case different from other rank 1 symmetric spaces of non-compact type. The goal of this paper is to repair this gap so that Theorem \ref{main_thm} still holds in the Cayley hyperbolic space.

In Section \ref{s1}, we will introduce the ``matrix model'' and the ``vector model'' for the Cayley hyperbolic space. The ``vector model'' has appeared in multiple resources (e.g. \cite{Parker}). It is an analogue of the Minkowski model for the real hyperbolic space. The ``matrix model'' is a variant of the model in \cite[\S19. Spaces of $\RR$-rank 1]{Mostow73} which is convenient for us to define the distance function. Towards the end of Section \ref{s1}, we will introduce Cayley lines as analogues of complex lines and explain their relations with curvature data in the ``vector model'' for the Cayley hyperbolic space. These computations lead to the gap in \cite[page 751]{BCG1}, i.e. the aforementioned $J_k$ maps do not exist. This gap is related to some kind of incompatibility between the sectional curvature data and multiplying vectors by purely imaginary octonions due to non-associativity of octonionic multiplication. To be specific, we compare the complex hyperbolic space $\CC\mathbf{H}^n$ with the Cayley hyperbolic space $\OO\mathbf{H}^2$ (assuming their sectional curvatures take values in $[-4,-1]$ ). For any unit vector $v\in T\CC\mathbf{H}^n$, the sectional curvature between $v$ and $iv$ is always $-4$ because they lie on the same complex line. However, for any unit vector $v\in T\OO\mathbf{H}^2$ and any purely imaginary unit octonion $e$, the sectional curvature between $v$ and $ev$ can vary because they are not necessarily on the same Cayley line. Nevertheless, the computations in this section shows that there are still a lot of similarities between the Cayley structure on $\OO\mathbf{H}^2$ and the complex structure on $\CC\mathbf{H}^n$. 

Section \ref{s2} is an outline of Besson-Courtois-Gallot's original proof for Theorem \ref{main_thm} and Subsection \ref{s3} points out the gap in their proof. This gap is located in the technical part of Besson-Courtois-Gallot's argument regarding the following inequality
\begin{align*}\frac{\det\left(\int_{\del_\infty\widetilde M}(dB_\theta)^2(x)d\mu(\theta)\right)^{1/2}}{\det\left(\int_{\del_\infty\widetilde M}\hess B_\theta(x)d\mu(\theta)\right)}\leq\left(\frac{\sqrt{16}}{16+8-2}\right)^{16}.\end{align*}
Here $\del_\infty \widetilde M$ is the visual boundary (diffeomorphic to a sphere of dimension $\dim M-1$) of $\widetilde M$ and $\mu$ is a probability measure in the Lebesgue measure class of $\del_\infty \widetilde M$. We will repair this gap in Subsections \ref{s4}-\ref{s6}. 

The main idea to repair the gap is to control the ``bad outcome'' caused by the loss of the aforementioned $J_k$ maps. Subsection \ref{s4} reduces the above inequality to a pure linear algebra problem. To be specific, we first notice that we can define $J_k$ maps on each Cayley line (but not the whole tangent space!) which behave nicely with respect to curvature. This enables us to find a suitable basis on $T_x\widetilde M$ such that symmetric bilinear forms in both the numerator and the denominator have matrices almost as nice as the case in the complex (or quaternionic) hyperbolic setting. Therefore we can translate the geometric information into properties on matrices and use these properties instead for the rest of the proof. In Subsection \ref{s5}, we prove Lemma \ref{key_lemma} which deals with the linear algebra problem from Subsection \ref{s4}. This is the main part and the most techincal part of this paper. We first observe that we can make small adjustments to Besson-Courtois-Gallot's proof in \cite[Appendice B]{BCG1} so that the inequality holds when eigenvalues of these matrices are relatively close to their averages respectively. This corresponds to \hyperlink{case1}{\textbf{Case 1}} in Lemma \ref{key_lemma} considered as the ``good'' case. The complementary case is much more unpleasant to work with and hence considered as the ``bad'' case (see \hyperlink{case2}{\textbf{Case 2}} in Lemma \ref{key_lemma}). In this case, some eigenvalues of the matrices are extremely large compared to other eigenvalues. Although the proof in this case is very technical, the overall idea behind the proof is much simpler. By \cite[B.4 Lemme]{BCG1}, the extreme eigenvalues will actually help us in the proof. Therefore the general idea in this case is to show that these extreme eigenvalues help us more than the possible trouble created by the loss of the aforementioned $J_k$ maps. Subsection \ref{s6} proves a slightly stronger inequality which is also used in Besson-Courtois-Gallot's proof (see \cite[B.5 Proposition]{BCG1}).

\textbf{Acknowledgements}: I would like to heartily thank my advisor Ralf Spatzier for his support during the entire work. I sincerely thank G\'erald Besson, Gilles Courtois and Sylvain Gallot for patiently proofreading this paper and providing valuable suggestions. I am also very grateful to Chris Connell for helpful and thorough discussions on this subject.

\section{Octonions and the Cayley hyperbolic space}\label{s1}
This section is a very brief review on octonions and the Cayley hyperbolic space. A more detailed version can be found in \cite[Appendix A]{Ruan1}.

The set of octonions $\OO$ is an $8$-dimensional non-associative, non-commutative division algebra over $\RR$. Let $\langle\cdot,\cdot\rangle$ be the Euclidean inner product on $\OO$ and $|\cdot|$ the induced norm. Then we have the following properties. (See \cite{Springer00})
\begin{enumerate}
\item[(1).] $|ab|=|a||b|$ for any $a,b\in\OO$;
\item[(2).] $\langle ab,ac\rangle=\langle ba,ca\rangle=|a|^2\langle b,c\rangle$ for any $a,b,c\in \OO$;
\item[(3).] $\langle ac,bd\rangle+\langle ad,bc\rangle=2\langle a,b\rangle \langle c,d\rangle$ for any $a,b,c,d\in\OO$;
\item[(4).] $\overline{a}=2\langle a,1\rangle-a$ for any $a\in \OO$;
\item[(5).] $2\langle a,b\rangle=2\langle \overline{a},\overline{b}\rangle=\overline{a}b+\overline{b}a=a\overline{b}+b\overline{a}$ for any $a,b\in\OO$;
\item[(6).] $(ba)\overline{a}=\overline{a}(ab)=|a|^2b$ for any $a,b\in \OO$;
\item[(7).] $a(\overline{b}c)+b(\overline{a}c)=(c\overline{a})b+(c\overline{b})a=\langle a,b\rangle c$ for any $a,b,c\in\OO$;
\item[(8).] (Moufang Identities)
\begin{enumerate}
\item[(i).] $(ab)(ca)=a((bc)a)$ for any $a,b,c\in\OO$;
\item[(ii).] $a(b(ac))=(a(ba))c$ for any $a,b,c\in\OO$;
\item[(iii).] $b(a(ca))=((ba)c)a$ for any $a,b,c\in\OO$;
\end{enumerate}
\item[(9).] Multiplications involving only two octonions are associative.
\end{enumerate}
Let 
$$I_{1,2}=\matiii{1&0&0}{0&-1&0}{0&0&-1}$$
and 
$$\FFJ(1,2,\OO)=\left\{\FFX\in\mathrm{Mat}_{3\times 3}(\OO):I_{1,2}\FFX^*I_{1,2}=\FFX\right\}.$$
Any element $\FFX$ can be written in the following form
$$\FFX(\theta,a)=\matiii{\theta_1&a_3&\overline{a}_2}{-\overline{a}_3&-\theta_2&-a_1}{-a_2& -\overline{a}_1&-\theta_3},\quad \theta_j\in\RR, a_j\in\OO, j=1,2,3,$$
where $\theta=(\theta_1,\theta_2,\theta_3)$ and $x=(x_1,x_2,x_3)$.
\begin{dfn}[``Matrix model'']\label{matrix_model_OH^2}
We define the Cayley hyperbolic space $\OO\mathbf{H}^2$ as
$$\OO\mathbf{H}^2=\left\{\FFX\in\FFJ(1,2,\OO):\FFX^2=\FFX,\tr(\FFX)=1,\FFX_{11}>0 \right\}.$$
\end{dfn}
\begin{rmk}
The way we define the Cayley hyperbolic space using matrices is slightly different from that in \cite[\S19 Spaces of $\RR$-rank 1]{Mostow73}. One can eventually prove that they are equivalent. See \cite[Proposition A.4]{Ruan1} for a detailed argument.
\end{rmk}
\begin{prop}\label{model_equiv_OH^2}
For any trace $1$ idempotent $\FFX\in\FFJ(1,2,\OO)$ with $\FFX_{11}\neq 0$, there exists a unique vector $(\theta,a,b)\in\RR_+\times\OO^2$ such that 
$$\FFX=\mathrm{sgn}(\FFX_{11})I_{1,2}(\theta,b,c)^*(\theta,b,c),$$
where $\mathrm{sgn}(t)=t/|t|$ when $t\neq 0$. The set
$$\FFJ_{1,0}:=\{\FFX\in\FFJ(1,2,\OO):\FFX^2=\FFX,\tr(\FFX)=1,\FFX_{11}=0\}$$
is isomorphic to $\OO\mathbf{P}^1\isom S^8$.
\end{prop}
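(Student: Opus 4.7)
The proposition splits naturally into two parts: the explicit outer-product decomposition when $\FFX_{11}\neq 0$, and the identification $\FFJ_{1,0}\cong S^8$. I would handle them in sequence.

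For the decomposition, my plan is to proceed by direct matching. Writing $\FFX$ in the general form $\FFX(\theta,a)$, the matrix $\mathrm{sgn}(\FFX_{11})I_{1,2}v^*v$ for $v=(\theta,a,b)$ has $(i,j)$-entry $\sigma\epsilon_i\overline{v_i}v_j$ with $\sigma=\mathrm{sgn}(\FFX_{11})$ and $\epsilon=(1,-1,-1)$. Matching the first row of $\FFX$ forces $\theta=\sqrt{|\FFX_{11}|}$ (uniquely, given $\theta>0$) and determines $a,b$ from $\FFX_{12},\FFX_{13}$; this gives uniqueness immediately. Existence then amounts to verifying the remaining independent entries, which reduces to algebraic identities of the shape $\theta_1\theta_2=|a_3|^2$, $\theta_1\theta_3=|a_2|^2$, and $\theta_1 a_1=\overline{a_2a_3}$ (with appropriate sign adjustments when $\sigma=-1$). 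These follow by comparing the $(1,1),(2,2),(3,3),(2,3)$ entries of $\FFX^2$ with those of $\FFX$ and combining them with the trace relation $\theta_1-\theta_2-\theta_3=1$.

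The potential obstacle here is non-associativity of $\OO$, but a close look shows that every product appearing in the required identities involves at most two non-real octonions, so property (9) (equivalently, Artin's theorem) makes associativity automatic. I therefore expect the real difficulty to be tedious bookkeeping rather than a genuine obstruction: in particular one must track the sign $\sigma$ carefully in the case $\FFX_{11}<0$, where the norm relation $\theta^2-|a|^2-|b|^2$ flips from $+1$ to $-1$, and the case analysis needs to handle $a_2=0$ or $a_3=0$ separately (using the off-diagonal idempotent identities to force the compatible vanishing on the other side).

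For $\FFJ_{1,0}$, setting $\FFX_{11}=0$ in the $(1,1)$-entry of $\FFX^2=\FFX$ forces $\theta_1^2-|a_2|^2-|a_3|^2=0$ with $\theta_1=0$, hence $a_2=a_3=0$. What remains is the lower-right $2\times 2$ Hermitian block of $\FFX$, which is precisely a trace-$1$ idempotent over $\OO$ in rank $2$; this is the standard Jordan-algebraic description of $\OO\mathbf{P}^1$. The topological identification $\OO\mathbf{P}^1\cong S^8$ is classical, and can be made concrete by parametrizing the remaining data $(\theta_2,a_1)$ subject to the constraint $|a_1|^2=-\theta_2(1+\theta_2)$ coming from the $(2,2)$-entry of $\FFX^2=\FFX$, and observing that this sweeps out an $8$-sphere with $\theta_2\in\{-1,0\}$ giving the two poles. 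This part of the proof should be essentially immediate once the structure of the first half is clear.
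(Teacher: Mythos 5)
Your approach is essentially the same as the one the paper delegates to \cite[Proposition A.2]{Ruan1}: fix $v=(\theta,b,c)$ uniquely by matching the first row of $\mathrm{sgn}(\FFX_{11})I_{1,2}v^*v$ against that of $\FFX$, then verify the remaining entries from $\FFX^2=\FFX$ and $\tr\FFX=1$. Two small remarks. First, the case split on $a_2=0$ or $a_3=0$ that you anticipate is not actually needed: the $(2,3)$-entry of $\FFX^2=\FFX$ combined with the trace relation gives $\overline{a_2a_3}=(\theta_2+\theta_3+1)a_1=\theta_1a_1$ directly, and adding and subtracting the $(1,1),(2,2),(3,3)$ diagonal equations (after substituting $\theta_2+\theta_3+1=\theta_1$) yields $\theta_1\theta_2=|a_3|^2$ and $\theta_1\theta_3=|a_2|^2$ uniformly, whereas the referenced proof opts for a three-way case analysis on which of $a_2,a_3$ vanish. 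Second, your explicit realization of $\FFJ_{1,0}$ as the locus $|a_1|^2=-\theta_2(1+\theta_2)$, i.e.\ the sphere $(\theta_2+\tfrac12)^2+|a_1|^2=\tfrac14$ in $\RR\times\OO\isom\RR^9$, is a pleasant self-contained substitute for the appeal to \cite{Baez} made in the referenced proof.
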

\begin{proof}
See \cite[Proposition A.2]{Ruan1}.
\end{proof}

Therefore we have the following alternative definition for the Cayley hyperbolic space (also see \cite[page 87]{Parker}).
\begin{dfn}[``Vector model'']\label{vector_model_OH^2}
The Cayley hyperbolic space can be alternatively defined as $$\OO\mathbf{H}^2=\{(\theta,a,b)\in\RR_{+}\times\OO^2: \theta^2-|a|^2-|b|^2=1\}.$$
\end{dfn}
\begin{rmk}
One can easily check that for any $\FFX\in\OO\mathbf{H}^2$, $\FFX_{11}\geq 1$.

\end{rmk}

We refer to \cite[\S19. Spaces of $\RR$-rank 1]{Mostow73}, \cite[Appendix A]{Ruan1} and \cite{Baez} for an overview of related subjects. A very detailed and general theory can be found in \cite{Springer1}, \cite{Springer00} by T. A. Springer and F. D. Veldkemp for further reference.

Denote by $g_0$ the symmetric metric on $\OO\mathbf{H}^2$ and $M=(\OO\mathbf{H}^2,g_0)$ such that the distance function on $\OO\mathbf{H}^2$ is given by
$$\cosh(2d(\FFX,\FFY))=\tr(\FFX\FFY+\FFY\FFX)-1,\quad\forall \FFX,\FFY\in\OO\mathbf{H}^2$$
as in \cite{Mostow73}. $(\FFX\FFY+\FFY\FFX)/2=:\FFX\circ\FFY$ is also known as the Jordan multiplication of $\FFX$ and $\FFY$. We identify $T_{x_0}M_0$ with $\OO^2$ such that each unit vector $v=(a,b)\in\OO^2$ corresponds to the initial vector of $\gamma_v(t):=(\cosh(t),a\sinh(t),b\sinh(t))$. 
We will compute the Riemannian curvature data at $x_0$ by understanding the \textit{geodesic hinge} $\angle\FFX x_0\FFY$ for any $\FFX,\FFY\in M\setminus \{x_0\}$, where $\angle\FFX x_0\FFY$ consists of two geodesic segments $\overline{\FFX x_0}$, $\overline{\FFY x_0}$ and the angle $\measuredangle\FFX x_0\FFY$. A \textit{comparison hinge} of $\angle\FFX x_0\FFY$ in some space form $M'$ is a geodesic hinge $\angle\FFX 'x_0'\FFY'$ in $M'$ with the same angle such that the lengths of geodesic segments $\overline{\FFX x_0}$, $\overline{\FFY x_0}$ and $\overline{\FFX' x_0'}$, $\overline{\FFY' x_0'}$ are equal respectively.

\begin{prop}
The following hold for the Cayley hyperbolic space.
\begin{enumerate}\label{curv_OH^2}
\item[(1).] For any unit vector $v=(a,b)\in\OO^2$, $\gamma_v(t)$ gives a unit speed geodesic starting at $x_0$ with initial vector $v$; 
\item[(2).] The Riemannian metric at $x_0$ is given by the Euclidean inner product on $\OO^2$. Hence the map $\chi:\OO^2\to M$ such that
$$\chi(a,b)=\left(\cosh(|(a,b)|),\frac{a}{|(a,b)|}\sinh(|(a,b)|),\frac{b}{|(a,b)|}\sinh(|(a,b)|)\right)$$
gives the geodesic normal coordinates centered at $x_0$ (hence $d\chi:\OO^2\to T_{x_0}M_0$ is the isometric correspondence from $\OO^2$ with Euclidean inner product to $T_{x_0}M_0$ mentioned above)
\item[(3).] For any $v=(a,b)\in\OO^2$, denote by 
\begin{align*}
\Cay(v)=\begin{cases}
\displaystyle \OO\cdot(1,a^{-1}b),\quad& a\neq0; \\
\displaystyle \OO\cdot(0,1),&a=0
\end{cases}
\end{align*}
the \emph{Cayley line} containing $v$. Then for any non-parallel pair of non-zero vectors $(a,b),(c,d)$ contained in the same Cayley line and any pair of points $\FFX\in\gamma_{(a,b)}(\RR_+)$ and $\FFY\in\gamma_{(c,d)}(\RR_+)$, the comparison hinge $\angle \FFX' x_0'\FFY'$ of $\angle \FFX x_0\FFY$ in a space form of constant sectional curvature $-4$ satisfies $d(\FFX',\FFY')=d(\FFX,\FFY)$;
\item[(4).] For any $a,b\in\OO\setminus\{0\}$, any pair of points $\FFX\in\gamma_{(a,0)}(\RR_+)$ and $\FFY\in\gamma_{(0,b)}(\RR_+)$, the comparison hinge $\angle \FFX' x_0'\FFY'$ of $\angle \FFX x_0\FFY$ in a space form of constant sectional curvature $-1$ satisfies $d(\FFX',\FFY')=d(\FFX,\FFY)$.
\end{enumerate}
\end{prop}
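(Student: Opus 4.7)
The plan is to reduce the entire proposition to computations of $\tr(\FFX\FFY+\FFY\FFX)$ along the curves $\gamma_v$ via the distance formula $\cosh(2d(\FFX,\FFY)) = \tr(\FFX\FFY+\FFY\FFX)-1$; the curvature statements will then come out of double-angle identities and the hyperbolic law of cosines.

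For (1), I would write $x_0 = I_{1,2}(1,0,0)^*(1,0,0)$ and $\gamma_v(t) = I_{1,2} w_t^* w_t$ with $w_t = (\cosh t,\, a\sinh t,\, b\sinh t)$ and $|a|^2+|b|^2 = 1$, expand the $(1,1)$-entry (the only one contributing to the trace), and use only $|a|^2+|b|^2=1$ to get $\tr(x_0\gamma_v(t)+\gamma_v(t)x_0) = 2\cosh^2 t$, so $d(x_0,\gamma_v(t))=|t|$. The analogous expansion of $\tr(\gamma_v(s)\gamma_v(t)+\gamma_v(t)\gamma_v(s))$ contains a four-octonion combination $(\overline a b)(\overline b a)$, which collapses to the real scalar $|a|^2|b|^2$ by property (9) together with $|xy|=|x||y|$; after this simplification the trace becomes $2\cosh^2(t-s)$ and $d(\gamma_v(s),\gamma_v(t)) = |t-s|$, so $\gamma_v$ is a unit-speed geodesic. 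For (2), the identity $d(x_0,\chi(v)) = |v|$ on $\OO^2$ together with the polarization identity
\[
g_0\bigl(d\chi_0(v),\, d\chi_0(w)\bigr) = \tfrac12\,\left.\tfrac{\partial^2}{\partial s\,\partial t}\right|_{s=t=0} d\bigl(x_0,\,\chi(sv+tw)\bigr)^2
\]
forces $d\chi_0\colon(\OO^2,\langle\cdot,\cdot\rangle)\to T_{x_0}M_0$ to be an isometry, and $\chi = \exp_{x_0}\circ\, d\chi_0$ identifies $\chi$ as geodesic normal coordinates.

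For (3) and (4) I would carry out the trace computation for two unit vectors $u=(a_1,b_1)$ and $u'=(a_2,b_2)$ in full generality. Property (5) rewrites diagonal combinations such as $a_1\overline{a_2}+a_2\overline{a_1}$ as $2\langle a_1,a_2\rangle$, and the trace takes the shape
\begin{align*}
\tr\bigl(\gamma_u(s)\gamma_{u'}(t)+\gamma_{u'}(t)\gamma_u(s)\bigr) =\;& 2\cosh^2 s\cosh^2 t - 4\langle u,u'\rangle\cosh s\sinh s\cosh t\sinh t \\
& + 2\, C(u,u')\,\sinh^2 s\sinh^2 t,
\end{align*}
where
\[
C(u,u') = |a_1|^2|a_2|^2 + |b_1|^2|b_2|^2 + \mathrm{Re}\bigl((\overline{a_1}b_1)(\overline{b_2}a_2)\bigr) + \mathrm{Re}\bigl((\overline{b_1}a_1)(\overline{a_2}b_2)\bigr).
\]
Statement (4) is then immediate: for $u=(a,0)$ and $u'=(0,b)$ every term in $C(u,u')$ has a zero factor and $\langle u,u'\rangle=0$, so $\cosh(2d) = 2\cosh^2 s\cosh^2 t - 1 = \cosh(2\rho)$ with $\cosh\rho = \cosh s\cosh t$, i.e.\ the Pythagorean identity in a curvature-$-1$ space form. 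For (3) I would parametrize the common Cayley line by its slope $\mu = a^{-1}b$ and write $u = (a_1,a_1\mu)$, $u' = (a_2,a_2\mu)$; then $\overline{a_i}(a_i\mu) = |a_i|^2\mu$ and $\overline{a_i\mu}\,a_i = |a_i|^2\overline{\mu}$, each using only associativity of products of two octonions (property (9)), so $C(u,u') = |a_1|^2|a_2|^2(1+|\mu|^2)^2 = 1$ after applying $|a_i|^2(1+|\mu|^2)=1$. Double-angle formulas then convert the trace into $\cosh(2d) = \cosh 2s\cdot\cosh 2t - \sinh 2s\cdot\sinh 2t\cdot\cos\alpha$, the hyperbolic law of cosines in a curvature-$-4$ space form.

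The main obstacle I anticipate is the evaluation of $C(u,u')$ in case (3). A priori the product $(\overline{a_1}b_1)(\overline{b_2}a_2)$ involves four distinct octonions, and there is no identity in $\OO$ allowing one to reassociate it into a function of $\langle u,u'\rangle$---this is exactly the algebraic reason that sectional curvatures on $\OO\mathbf{H}^2$ genuinely vary, and the reason the global $J_k$-endomorphisms discussed in the introduction cannot exist. The Cayley-line hypothesis is precisely what rescues the computation: it forces each inner factor $\overline{a_i}b_i$ to be a real multiple of the single octonion $\mu$, cutting the ambient algebra down to the two octonions $\mu,\overline\mu$, at which point property (9) supplies the associativity needed to close the computation and produce the curvature-$-4$ comparison.
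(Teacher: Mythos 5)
Your proposal is correct and follows essentially the same route as the paper: both reduce each claim to expanding $\tr(\FFX\FFY+\FFY\FFX)$ along the curves $\gamma_v$, with the four-octonion cross-terms in item (3) collapsed via $\overline{a}(ab)=|a|^2b$ and pair-associativity (the paper parametrizes the Cayley line by $v=(\cos\theta,a\sin\theta)$, you by the slope $\mu=a^{-1}b$; these are equivalent). The only cosmetic difference is in item (2): you polarize $d^2$ with a mixed second derivative, whereas the paper takes a single first derivative of the distance to a nearby point, both being small-parameter Taylor arguments on the same trace expansion.
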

\begin{proof}
The proof can be found in \cite[Proposition A.5]{Ruan1}. We will present the proof for reader's convenience.
\begin{enumerate}
\item[(1).] This follows easily by direct computations.
\item[(2).] Let $(a,b),(c,d)$ be unit vectors in $\OO^2$. Hence the inner product of these two vectors is given by
\begin{align*}
&-\left.\frac{d}{dt}\right|_{t=0}d(\gamma_{(a,b)}(t),\gamma_{(c,d)}(1)) \\=&-\frac{\left.\frac{d}{dt}\right|_{t=0}\left(\tr[\gamma_{(a,b)}(t)\gamma_{(c,d)}(1)+\gamma_{(c,d)}(1)\gamma_{(a,b)}(t)]-1\right)}{2\sinh(2)} \\
=&\frac{2\sinh(1)\cosh(1)(\langle a,c\rangle+\langle b,d\rangle)}{2\sinh(2)} 
=\langle a,c\rangle+\langle b,d\rangle.
\end{align*}
\item[(3).] Let $a,b,c$ be unit octonions and $\theta\in[0,\pi/2)$. Let $v=(\cos\theta,a\sin\theta)$. For any $t_1,t_2>0$, we have
\begin{align*}
&\cosh(2d(\gamma_{bv}(t_1),\gamma_{cv}(t_2)))\\
=&\tr[\gamma_{bv}(t_1)\gamma_{cv}(t_2)+\gamma_{cv}(t_2)\gamma_{bv}(t_1)]-1 \\
=&2\cosh^2(t_1)\cosh^2(t_2)-4\cosh(t_1)\sinh(t_1)\cosh(t_2)\sinh(t_2)\langle bv,cv\rangle -1\\
&+2\sinh^2(t_1)\sinh^2(t_2)(\cos^4\theta+\sin^4\theta)+4\sinh^2(t_1)\sinh^2(t_2)\cos^2\theta\sin^2\theta \\
=&2\cosh^2(t_1)\cosh^2(t_2)+2\sinh^2(t_1)\sinh^2(t_2)-\sinh(2t_1)\sinh(2t_1)\langle bv,cv\rangle -1 \\
=&\cosh(2t_1)\cosh(2t_2)-\sinh(2t_1)\sinh(2t_1)\langle bv,cv\rangle.
\end{align*}
Notice that $t_1=d(\gamma_{bv}(t_1),E_1)$ and $t_2=d(\gamma_{cv}(t_2),E_1)$, the above equation coincides with the law of cosines in a space form with constant sectional curvature $-4$.
\item[(4).] Let $a,b$ be unit octonions and $t_1,t_2>0$. Write $v=(a,0)$ and $w=(0,b)$. Then
\begin{align*}
\cosh(2d(\gamma_{v}(t_1),\gamma_{w}(t_2)))=2\cosh^2(t_1)\cosh^2(t_2)-1,
\end{align*}
which implies that 
$$\cosh(d(\gamma_{v}(t_1),\gamma_{w}(t_2)))=\cosh(t_1)\cosh(t_2).$$
The above equation coincides with the law of cosines in a space form with constant sectional curvature $-1$.\qedhere
\end{enumerate}
\end{proof}
A direct corollary of the above proposition is the following.
\begin{cor}\label{curv_data_OH^2}
For any non-zero $v,w\in \OO^2$ the following hold.
\begin{enumerate}
\item[(1).] If $v,w$ belong to the same Cayley line and $v\not\in\RR w$, then the sectional curvature of the $2$-dimensional plane spanned by $d\chi(v),d\chi(w)$ is $-4$;
\item[(2).] If $\Cay(v)\perp\Cay(w)$, then the sectional curvature of the $2$-dimensional plane spanned by $d\chi(v),d\chi(w)$ is $-1$.
\item[(3).] Recall that from classical results that $\OO\mathbf{H}^2=F_4^{-20}/\Spin(9)$ with $\Spin(9)$ the stabilizer of $x_0$. Since $F_4^{-20}$ acts by isometries on $M=(\OO\mathbf{H}^2,g_0)$, identifying $T_{x_0}M_0$ with $\OO^2$, one can view $\Spin(9)$ as a subgroup of $\SO(\OO^2)\isom\SO(16)$. In particular, $\Spin(9)$ maps Cayley lines to Cayley lines and acts transitively on the set of all Cayley lines.  
\end{enumerate}
\end{cor}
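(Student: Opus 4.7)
The plan is to extract the sectional curvature at $x_0$ from the hinge distance data in Proposition \ref{curv_OH^2} via a Taylor-matching argument, together with the transitive action of the isotropy group $\Spin(9)$ on Cayley lines. The basic tool is the following standard fact: for unit vectors $v,w\in T_{x_0}M$ making an angle $\alpha$, the Taylor expansion of the squared hinge distance at $(t,s)=(0,0)$ has the form
$$d(\gamma_v(t),\gamma_w(s))^2=t^2+s^2-2ts\cos\alpha -\tfrac{1}{3}K(v,w)\sin^2\alpha\cdot t^2 s^2+O(|(t,s)|^5),$$
where the leading curvature-dependent coefficient is proportional to the sectional curvature $K(v,w)$ of the $2$-plane spanned. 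In a model space of constant curvature $\kappa$ the same expansion holds with $\kappa$ in place of $K(v,w)$. Consequently, whenever all hinge distances $d(\gamma_v(t),\gamma_w(s))$ in $M$ agree with those of a comparison hinge in a model of constant curvature $\kappa$, matching Taylor coefficients yields $K(v,w)=\kappa$.

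Part (1) is then immediate: given non-parallel $v,w$ on the same Cayley line, normalize to unit vectors (this changes neither the $2$-plane spanned nor its sectional curvature) and apply the Taylor argument to Proposition \ref{curv_OH^2}(3) to conclude $K(d\chi(v),d\chi(w))=-4$. For part (3), the identification $\OO\mathbf{H}^2=F_4^{-20}/\Spin(9)$ with isotropy $\Spin(9)$ at $x_0$ is classical (see \cite{Mostow73}, \cite{Baez}), and its differential at $x_0$ embeds $\Spin(9)\hookrightarrow\SO(T_{x_0}M)\isom\SO(16)$. Since $\OO\mathbf{H}^2$ is a rank-one symmetric space, $\Spin(9)$ acts transitively on the unit sphere $S^{15}\subset T_{x_0}M$. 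The set of Cayley lines through $x_0$ is the quotient of $S^{15}$ by the equivalence relation ``lying on the same Cayley line'', which is naturally $\OO\mathbf{P}^1\isom S^8$, and the induced $\Spin(9)$-action on this quotient inherits transitivity. Finally, by part (1) a Cayley line through $x_0$ is intrinsically characterized as a real $8$-dimensional subspace of $T_{x_0}M$ every non-degenerate $2$-plane of which has sectional curvature $-4$; this property is preserved by isometries, so $\Spin(9)$ sends Cayley lines to Cayley lines.

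For part (2), the reduction to the standard orthogonal pair is carried out using (3). By transitivity, pick $\phi\in\Spin(9)$ with $d\phi|_{x_0}(\Cay(v))=\OO\times\{0\}$. Since $d\phi|_{x_0}$ is orthogonal, $d\phi|_{x_0}(\Cay(w))$ is an $8$-dimensional Cayley line orthogonal to $\OO\times\{0\}$; as the Euclidean orthogonal complement of $\OO\times\{0\}$ in $T_{x_0}M$ equals the Cayley line $\{0\}\times\OO$, a dimension count forces $d\phi|_{x_0}(\Cay(w))=\{0\}\times\OO$. Hence $d\phi|_{x_0}(v)=(a,0)$ and $d\phi|_{x_0}(w)=(0,b)$ for some $a,b\in\OO\setminus\{0\}$, and Proposition \ref{curv_OH^2}(4) combined with the Taylor comparison yields sectional curvature $-1$ on the image $2$-plane, which equals $K(d\chi(v),d\chi(w))$ since $\phi$ is an isometry fixing $x_0$. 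The main technical obstacle is precisely this normalization step in (2): Proposition \ref{curv_OH^2}(4) only provides hinge data for the standard pair $\OO\times\{0\}$ and $\{0\}\times\OO$, so one must check that transitivity of $\Spin(9)$ on single Cayley lines suffices to simultaneously normalize both members of a generic orthogonal pair, which the dimension count above accomplishes.
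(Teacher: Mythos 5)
Your overall route — extracting sectional curvature from the comparison-hinge data of Proposition~\ref{curv_OH^2} by matching the fourth-order Taylor coefficient of $d(\gamma_v(t),\gamma_w(s))^2$, and then normalizing a perpendicular pair of Cayley lines to the standard pair $\OO\times\{0\}$, $\{0\}\times\OO$ using the isotropy action — is exactly what ``direct corollary'' means here, and parts (1) and (2) are fine once (3) is in hand. The dimension count in (2) is correct and, as you note, does not even require $d\phi(\Cay(w))$ to be a Cayley line, only that it is $8$-dimensional and orthogonal to $\OO\times\{0\}$.

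The gap is in your derivation of (3). You assert that ``by part (1) a Cayley line through $x_0$ is intrinsically characterized as a real $8$-dimensional subspace every non-degenerate $2$-plane of which has sectional curvature $-4$,'' and then use preservation of this property under isometries to conclude that $\Spin(9)$ maps Cayley lines to Cayley lines. But part (1) only gives the \emph{forward} implication: every $2$-plane inside a Cayley line has curvature $-4$. For your conclusion you need the \emph{converse}: any $8$-dimensional subspace of $T_{x_0}M$ all of whose $2$-planes have curvature $-4$ is a Cayley line. That converse is true, but it is not a consequence of (1) — it requires knowing, say, that for unit vectors $u\perp u'$ the condition $K(u,u')=-4$ forces $u'\in\Cay(u)$, which is additional curvature information beyond the two special values computed in Proposition~\ref{curv_OH^2}. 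There is also a minor ordering issue: you claim the $\Spin(9)$-action on the quotient $S^{15}/\!\sim\;\cong\OO\mathbf{P}^1$ is transitive \emph{before} establishing that the action descends to that quotient, which is precisely the Cayley-line-preservation you set out to prove. The paper avoids all of this by treating (3) — the identification $\OO\mathbf{H}^2=F_4^{-20}/\Spin(9)$, the spin representation on $\OO^2$, and its preservation of and transitivity on the octonionic Hopf fibration — as a classical citation rather than something to be rederived from the curvature data; if you want to make (3) self-contained you must either supply the converse of the $-4$-curvature characterization or fall back on that citation as the paper does.
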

\begin{rmk}
Let $p$ be a fixed point in $\CC\mathbf{H}^n$. Recall that for any unit vector $v\in T^1_p\CC\mathbf{H}^n$ with respect to the symmetric metric, there exists a linear map $J\in\End(T_p\CC\mathbf{H}^n)$ such that the sectional curvature between $v$ and $J(v)$ is $-4$. The same argument holds true in quaternionic hyperbolic spaces but \textbf{NOT} in the Cayley hyperbolic space. This is due to the non-associativity of octonionic multiplication. In fact, if such a map $J$ exists, without loss of generality we can assume that $p=x_0$ and identify the tangent space at $x_0$ with $\OO^2$ via $d\chi$ as in Proposition \ref{curv_OH^2}. If $J(1,0)=(a,0)$ for some unit octonion $a\in\OO\setminus \RR$, then for any unit octonion $b$ and any $\theta\in[0,\pi/2)$, $J(\cos\theta,b\sin\theta)=a(\cos\theta,b\sin\theta)$. In particular, $J(0,1)=(0,a)$. Similarly we have $J(b\sin\theta,\cos\theta)=a(b\sin\theta,\cos\theta)$ for any unit octonion $b$ and any $\theta\in[0,\pi/2)$, which implies that $J(v)=av$ for any $v\in\OO^2$. Therefore, for any octonions $b\neq 0,c$, $J(b,bc)=(ab,a(bc))\in\Cay(b,bc)=\Cay(1,c)$, which implies that $(ab)c=a(bc)$. Notice that if $(ab)c=a(bc)$ for any $b,c\in\OO$, then $a$ must be real. This contradicts the assumption that $a\in\OO\setminus\RR$.
\end{rmk}

Let $F_4^{-20}=\cK\cA\cN$ be the Iwasawa decomposition of $F_4^{-20}$. Denote by $v_{l,t}=(\delta_{1t}e_t,\delta_{2t}e_t)$ with $l=1,2$, $\delta_{lm}$ the Kronecker delta, $0\leq t\leq 7$ and $\{e_t\}_{0\leq t\leq7}$ the standard orthonormal basis for $\OO$. Then we can construct $\cA\cN$-invariant vector fields $\widetilde\xi_{l,t}$ such that  
$$\widetilde\xi_{l,t}(x_0)=\Psi(v_{l,t})\quad l=1,2\mathrm{~and~}0\leq t\leq 7,$$
where $\Psi$ is the same as $d\chi$ in Proposition \ref{curv_OH^2}. Define $\Psi_{x}:\OO^2\to T_xM_0$ such that
$$\Psi_{x}(v_{l,t})=\widetilde\xi_{l,t}(x),\quad l=1,2\mathrm{~and~}0\leq t\leq7.$$
For simplicity we denote by
$$\Cay(\xi)=\Psi_{x}\left(\Cay(\Psi_{x}^{-1}(\xi))\right),\quad0\neq\xi\in T_xM$$
the \emph{Cayley line} containing $\xi$. Since the isometry group acts transitively on the Cayley hyperbolic space, the same arguments as in Corollary \ref{curv_data_OH^2} can be applied to all points in $M_0$. Namely,
\begin{cor}\label{curv_data_general}
For any $x\in M_0$ and any non-zero $\xi,\xi'\in T_{x}M$.
\begin{enumerate}
\item[(1).] If $\xi,\xi'$ belong to the same Cayley line and $\xi\not\in\RR \xi'$, then the sectional curvature $K_{M}(\xi,\xi')=-4$;
\item[(2).] If $\Cay(\xi)\perp\Cay(\xi')$, then the sectional curvature $K_{M}(\xi,\xi')=-1$.
\end{enumerate}
\end{cor}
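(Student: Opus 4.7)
The plan is to use the homogeneity of $M_0=(\OO\mathbf{H}^2,g_0)$ to transport Corollary \ref{curv_data_OH^2} from $x_0$ to an arbitrary point $x\in M_0$. The key observation is that the frame $\{\widetilde\xi_{l,t}\}$ used to define $\Psi_x$ is $\cA\cN$-invariant by construction, so the transport of the Cayley line structure from $x_0$ to $x$ is implemented by a genuine isometry of $M_0$, and sectional curvatures are preserved.

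First I would invoke the Iwasawa decomposition $F_4^{-20}=\cK\cA\cN$ with $\cK=\Spin(9)$ fixing $x_0$, which implies that $\cA\cN$ acts simply transitively on $M_0$ by isometries. Given $x\in M_0$, let $\psi_x\in\cA\cN$ be the unique element with $\psi_x\cdot x_0=x$.

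Next I would check the compatibility $\Psi_x=d(\psi_x)_{x_0}\circ\Psi_{x_0}$. Since each $\widetilde\xi_{l,t}$ is $\cA\cN$-invariant, we have $d(\psi_x)_{x_0}\bigl(\widetilde\xi_{l,t}(x_0)\bigr)=\widetilde\xi_{l,t}(x)$, i.e. $d(\psi_x)_{x_0}(\Psi_{x_0}(v_{l,t}))=\Psi_x(v_{l,t})$ for every basis vector, hence for all $v\in\OO^2$. In particular, the definition $\Cay(\xi)=\Psi_x(\Cay(\Psi_x^{-1}\xi))$ shows that $d(\psi_x)_{x_0}$ sends Cayley lines at $x_0$ to Cayley lines at $x$, and preserves orthogonality of Cayley lines because it is a linear isometry.

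Finally, given $\xi,\xi'\in T_xM_0$ satisfying the hypothesis of (1) (resp.\ (2)), set $\eta:=d(\psi_x)_{x_0}^{-1}\xi$ and $\eta':=d(\psi_x)_{x_0}^{-1}\xi'$. By the previous step, $\eta,\eta'$ satisfy the hypothesis of (1) (resp.\ (2)) of Corollary \ref{curv_data_OH^2} at $x_0$, so $K_M(\eta,\eta')=-4$ (resp.\ $-1$). Since $\psi_x$ is an isometry, $K_M(\xi,\xi')=K_M(\eta,\eta')$, which gives the desired values. There is no serious obstacle here: the only subtlety is the verification that the $\cA\cN$-invariant framing $\Psi_x$ coincides with pushforward by $\psi_x$, which is precisely why the vector fields $\widetilde\xi_{l,t}$ were constructed to be $\cA\cN$-invariant.
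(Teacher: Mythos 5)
Your proof is correct and takes essentially the same route as the paper: the paper dispatches this corollary in a single sentence by invoking transitivity of the isometry group, and you have simply made explicit the compatibility $\Psi_x = d(\psi_x)_{x_0}\circ\Psi_{x_0}$ coming from the $\cA\cN$-invariance of the framing, which is precisely the point the paper leaves implicit.
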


\section{An overview of Besson-Courtois-Gallot's approach to volume entropy rigidity in rank one symmetric spaces of non-compact type}\label{s2}
Denote by $(\widetilde N,g)$ and $(\widetilde M,g_0)$ the universal (Riemannian) covers of $(N,g)$ and $(M,g_0)$ respectively. Let $\Gamma$ be the fundamental group of $M$. For any $x\in \widetilde M$, denote by $\mu_x$ the Patterson-Sullivan measure (See \cite{Al}) at $x$ for $\Gamma$. In this particular case, $\mu_x$ is also the probability measure on the topological boundary $\del_\infty \widetilde M$ invariant under $\mathrm{Stab}_x(\mathrm{Isom}(\widetilde M))$. One can view $\mu_x$ as the push forward of the Haar measure on $T^1_x\widetilde M$ to the topological boundary via opposite geodesic rays centered at $x$. For any $s>h(g)$ we define a family of probability measures $\{\sigma_y^s\}_{y\in\widetilde N}$ on $\del_\infty \widetilde M$ as
$$\sigma_y^s(U)=\frac{\int_{\widetilde N}e^{-sd_g(y,z)}\mu_{\widetilde f(z)}(U)dg(z)}{\int_{\widetilde N}e^{-sd_g(y,z')}dg(z')},\quad~\forall y\in\widetilde N, U\subset\del_\infty \widetilde M,$$
where $\widetilde f:\widetilde N\to\widetilde M$ is a lift of $f$ to universal covers with respect to a pair of base points $p\in N$ and $f(p)\in M$.

Fix a point $x_0\in\widetilde M$, we define the \emph{Busemann function} as
$$B_{\theta}(x)=b_\theta(x,x_0)=\lim_{t\to\infty}d(x,\gamma_\theta(t))-t,\quad\forall x\in M, \theta\in\del_\infty M.$$
where $\gamma_\theta(t)$ is the geodesic ray starting from $x_0$ towards $\theta$. Then we define a $C^1$-map (due to implicit function theorem. See \cite[pages 635-636]{BCG2}.) 
$\widetilde F_s:\widetilde N\to\widetilde M$ such that $\widetilde F_s(y)$ is the unique critical point of the following function
$$\cB_{s,y}(x):=\int_{\del_\infty\widetilde M}B_\theta(x)d\sigma_y^s(\theta),$$
where uniqueness comes from the convexity of Busemann functions and $\sigma_y^s$ is in the measure class of the Haar measure. It is not hard to see that the definition here is independent of the choice of $x_0$ up to an additive constant and the map $\tilde F_s$ naturally descends to a map $F_s:N\to M$. Also when $s$ goes to $\infty$, $F_s$ goes to $f$, which implies that $F_s$ is homotopic to $f$. Direct computation shows that for any $v\in T_{\widetilde F_s(y)}\widetilde M$ and any $w\in T_y\widetilde N$, we have
\begin{align*}
0=&D_yD_{x=\widetilde F_s(y)}\cB_{s,y}(x)(v,w)=\int_{\del_\infty\widetilde M}\hess B_\theta|_{\widetilde F_s(y)}(D_y\widetilde F_s(w),v)d\sigma_y^s(\theta) \\ 
&-s\int_{\widetilde N}\int_{\del_\infty M}dB_\theta|_{\widetilde F_s(y)}(v)\langle \nabla_yd_g(y,z),w\rangle  \frac{e^{-sd_g(y,z)}}{\int_{\widetilde N}e^{-sd_g(y,z)}dg(z)}d\mu_{\widetilde f(z)}(\theta)dg(z).
\end{align*}
Hence as matrices
\begin{align}\label{DF_s}
D_y\widetilde F_s=sW_y^{-1}\cdot H_y,
\end{align}
where $W_y$ and $H_y$ are matrices defined by bilinear forms on $T_{\widetilde F_s(y)}\widetilde M\times T_{\widetilde F_s(y)}\widetilde M$ and $T_y\widetilde N\times T_{\widetilde F_s(y)}\widetilde M$ respectively such that
$$W_y(v',v)=\int_{\del_\infty\widetilde M}\hess B_\theta|_{\widetilde F_s(y)}(v',v)d\sigma_y^s(\theta)$$
and 
$$H_y(w,v)=\int_{\widetilde N}\int_{\del_\infty M}dB_\theta|_{\widetilde F_s(y)}(v)\langle \nabla_yd_g(y,z),w\rangle  \frac{e^{-sd_g(y,z)}}{\int_{\widetilde N}e^{-sd_g(y,z')}dg(z')}d\mu_{\widetilde f(z)}(\theta)dg(z)$$
for any $w\in T_y\widetilde N$ and $v,v'\in T_{\widetilde F_s(y)}\widetilde M$.

In particular, by H\"older's inequality we have 
\begin{align}\label{trace_control}
|\tr H_y|\leq&\tr\left(\int_{\del_\infty\widetilde M}dB^2_\theta|_{\widetilde F_s(y)}d\sigma_y^s(\theta)\right)^{1/2}\tr\left(\int_{\widetilde N}\frac{\langle \nabla_yd_g(y,z),w\rangle^2e^{-sd_g(y,z)}}{\int_{\widetilde N}e^{-sd_g(y,z')}dg(z')} dg(z)\right)^{1/2}\leq 1
\end{align}
and
\begin{align}\label{Holder}
|\det H_y|\leq&\det\left(\int_{\del_\infty\widetilde M}dB^2_\theta|_{\widetilde F_s(y)}d\sigma_y^s(\theta)\right)^{1/2}\det\left(\int_{\widetilde N}\frac{\langle \nabla_yd_g(y,z),w\rangle^2e^{-sd_g(y,z)}}{\int_{\widetilde N}e^{-sd_g(y,z')}dg(z')} dg(z)\right)^{1/2}\nonumber\\
\leq&\left(\frac{1}{\sqrt{n}}\right)^n\det\left(\int_{\del_\infty\widetilde M}dB^2_\theta|_{\widetilde F_s(y)}d\sigma_y^s(\theta)\right)^{1/2}.
\end{align}
Therefore
\begin{align}\label{det_est}
|\mathrm{Jac} \widetilde F_s(y)|\leq \left(\frac{s}{\sqrt{n}}\right)^n\frac{\det\left(\int_{\del_\infty\widetilde M}dB^2_\theta|_{\widetilde F_s(y)}d\sigma_y^s(\theta)\right)^{1/2}}{\det\left(\int_{\del_\infty\widetilde M}\hess B_\theta|_{\widetilde F_s(y)}d\sigma_y^s(\theta)\right)}.
\end{align}
In order to prove the inequality \eqref{ent_rigid_ineq}, it suffices to show that the right hand side of \eqref{det_est} is smaller than or equal to $(s/h(g_0))^n$. Note that in the non-compact case, one also needs to verify that $F_s$ is proper. Moreover
\begin{align}\label{sharp_det_est}
\left(\frac{s}{\sqrt{n}}\right)^n\frac{\det\left(\int_{\del_\infty\widetilde M}dB^2_\theta|_{\widetilde F_s(y)}d\sigma_y^s(\theta)\right)^{1/2}}{\det\left(\int_{\del_\infty\widetilde M}\hess B_\theta|_{\widetilde F_s(y)}d\sigma_y^s(\theta)\right)}\leq\left(\frac{s}{h(g_0)}\right)^n\left[1-K\sum_{j=1}^n\left(\nu_j-\frac{1}{n}\right)^2\right]
\end{align}
for some positive constant $K$, where $\nu_1,...,\nu_n$ are eigenvalues of $\int_{\del_\infty\widetilde M}dB^2_\theta|_{\widetilde F_s(y)}d\sigma_y^s(\theta)$. This is also the part where the proof in \cite{BCG1} has a gap. We will explain this in Subsection \ref{s3}. 

When equality holds in \eqref{ent_rigid_ineq}, assuming $h(g)=h(g_0)$ after scaling, \cite[7.6 Lemme]{BCG1} and \cite[7.8 Lemme]{BCG1} imply that there exists a sequence $s_k\to h(g)^+$ such that $F_{s_k}$ converge to a 1-Lipschitz map $F:N\to M$ uniformly on compact subsets. Note that in the non-compact case, $F$ is also a proper map following \cite[Proposition 5.1]{BCS} or \cite[Lemma 6.7]{CF0}. Then it follows from \cite[C.1 Proposition]{BCG1} that $F$ is a local isometry. Since we can still prove \eqref{sharp_det_est} in the Cayley hyperbolic setting (see Corollary \ref{key_cor}), the whole argument for the equality case in \eqref{ent_rigid_ineq} still works for the Cayley hyperbolic case without any changes. 

\section{The gap in the proof of Theorem \ref{main_thm} from \cite{BCG1} and its repair}
\subsection{The gap in the proof of Theorem \ref{main_thm} from \cite{BCG1} regarding the Cayley hyperbolic space}\label{s3}
Following the notations from the previous sections, we recall that the \emph{Busemann function} satisfies
$$B_{\theta}(x)=b_\theta(x,x_0)=\lim_{t\to\infty}d(x,\gamma_\theta(t))-t,\quad\forall x\in \widetilde M, \theta\in\del_\infty \widetilde M.$$
where $\gamma_\theta(t)$ is the geodesic ray starting from $x_0$ towards $s$. In \cite[page 36]{CF} we have
\begin{align}\label{hess_gen_formula}\hess B_\theta(x)=\sqrt{-R(\grad B_\theta,\cdot,\grad B_\theta,\cdot)},\end{align}
where $R$ denotes the Riemannian curvature tensor.

In \cite[page 751]{BCG1}, Besson-Courtois-Gallot claimed that there exist fiberwise linear maps $J_l:T\widetilde M\to T\widetilde M$ with $1\leq l\leq d-1$ such that 
\begin{align}\label{hess_easy_case}\hess B_\theta(x)=g_0-dB_\theta^2+\sum_{l=1}^{d-1}(dB_\theta\circ J_l)^2.\end{align}

By Corollary \ref{curv_data_general}, $J_l(\grad B_\theta(x))$ is contained in $\Cay(\grad B_\theta(x))$. However, the remark after Corollary \ref{curv_data_OH^2} proves that such linear maps do not exist. Hence we believe there is a gap here. Since this calculation only affects \eqref{sharp_det_est}, Theorem \ref{main_thm} still works in the Cayley hyperbolic space following \cite{BCG1} provided we can prove \eqref{sharp_det_est}.

\subsection{Repairing the gap: reducing a weaker version of \eqref{sharp_det_est} into a linear algebra problem}\label{s4}
Following the notations from Section \ref{s3}, we will first discuss a weaker version of \eqref{sharp_det_est} when $M$ is locally isometric to the Cayley hyperbolic space. Namely the following
\begin{align}\label{coarse_det_est}
\left(\frac{s}{\sqrt{n}}\right)^n\frac{\det\left(\int_{\del_\infty\widetilde M}dB^2_\theta|_{\widetilde F_s(y)}d\sigma_y^s(\theta)\right)^{1/2}}{\det\left(\int_{\del_\infty\widetilde M}\hess B_\theta|_{\widetilde F_s(y)}d\sigma_y^s(\theta)\right)}\leq\left(\frac{s}{h(g_0)}\right)^n.
\end{align}
We will leave the proof of \eqref{sharp_det_est} in Subsection \ref{s6}. Notice that $n=16$, $h(g_0)=16+8-2=22$ in the Cayley hyperbolic setting, \eqref{coarse_det_est} follows from the following proposition.
\begin{prop}\label{ineq_prop}
Let $x\in M$ be an arbitrary point. Denote by $\mu$ a probability measure on $\del_\infty \widetilde M$ in the Lebesgue measure class. For any positive function $\rho\in L^2(\del_\infty \widetilde M,d\mu)$, we have
\begin{align}\label{VIP_ineq}\frac{\det\left(\int_{\del_\infty \widetilde M}(dB_\theta)^2(x)d\mu(\theta)\right)^{1/2}}{\det\left(\int_{\del_\infty \widetilde M}\hess B_\theta(x)d\mu(\theta)\right)}\leq\left(\frac{\sqrt{16}}{16+8-2}\right)^{16}.\end{align}
\end{prop}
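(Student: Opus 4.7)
My plan is to translate Proposition~\ref{ineq_prop} into a determinantal inequality between two explicit $16\times 16$ symmetric matrices built from $\mu$, and then to prove the resulting matrix inequality by a case analysis that reflects the Cayley line structure.

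The first step is to rewrite the Hessian using Corollary~\ref{curv_data_general} together with \eqref{hess_gen_formula}. For a unit vector $\xi=\grad B_\theta(x)$, the Hessian of the Busemann function vanishes on $\RR\xi$, equals $2\cdot\id$ on the $7$-dimensional subspace $\Cay(\xi)\ominus\RR\xi$ (sectional curvature $-4$), and equals $\id$ on the $8$-dimensional subspace $\Cay(\xi)^\perp$ (sectional curvature $-1$). Hence
$$\hess B_\theta=g_0-(dB_\theta)^2+C_\theta,$$
where $C_\theta$ is the orthogonal projection onto $\Cay(\xi)\ominus\RR\xi$. Setting $H:=\int(dB_\theta)^2\,d\mu$ and $C:=\int C_\theta\,d\mu$, the inequality \eqref{VIP_ineq} becomes $\det(H)^{1/2}/\det(I-H+C)\leq(\sqrt{16}/22)^{16}$, with a priori trace constraints $\tr H=1$ and $\tr C=7$, so that $\tr(I-H+C)=22=h(g_0)$.

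To extract useful algebraic structure from $C$, I would identify $T_xM\cong\OO^2$ and fix an orthogonal decomposition $T_xM=L_1\oplus L_2$ into two Cayley lines, using transitivity of $\Spin(9)$ on Cayley lines. On each $L_i$ I would choose an octonionic basis $\{u_i,e_1 u_i,\ldots,e_7 u_i\}$ for a unit vector $u_i\in L_i$, where $e_1,\ldots,e_7$ are the imaginary unit octonions; left multiplication by $e_k$ defines isometries $J_k\colon L_i\to L_i$ that play the role that a global complex or quaternionic structure plays in \cite[p.~751]{BCG1}. For an arbitrary unit $\xi\in T_xM$ one can still write $C_\theta=\sum_{k=1}^{7}(dB_\theta\circ J_k^\xi)^2$ for some $\xi$-dependent orthonormal frame $J_k^\xi$ of $\Cay(\xi)\ominus\RR\xi$, but by the remark following Corollary~\ref{curv_data_OH^2} these frames cannot be globalized. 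Relative to the fixed Cayley-adapted basis, the matrices of $H$ and $C$ therefore differ from their complex/quaternionic analogues only by explicit correction terms controlled by the relative position of $\Cay(\xi)$ with respect to $L_1\cup L_2$, and the proposition is reduced to a finite-dimensional statement about pairs of positive semidefinite matrices $(H,C)$ obeying these structural constraints.

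To prove that matrix statement (which should be recorded as Lemma~\ref{key_lemma}), I would diagonalise $H$ with eigenvalues $\nu_1\leq\cdots\leq\nu_{16}$ summing to $1$, and split into a \emph{good} regime where the $\nu_j$ are relatively close to $1/16$ and a \emph{bad} regime where some $\nu_j$ are extreme. In the good regime, a perturbation of the argument in \cite[Appendice~B]{BCG1} should succeed: the error introduced by replacing global $J_k$'s by the local ones above is of second order in the eigenvalue deviations and is absorbed by the same convexity estimates BCG use. The main obstacle is the bad regime, where the complex/quaternionic strategy breaks down. My plan there is to leverage \cite[B.4 Lemme]{BCG1}, which says that extremal eigenvalues of $H$ strictly improve the target determinantal bound; the task is to make this slack quantitative and compare it to the extra error term coming from octonionic non-associativity, showing that the former dominates the latter. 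Concretely, one can track the sum $H+C=\int P_{\Cay(\xi)}\,d\mu$ (with $P_{\Cay(\xi)}$ the projection onto the full $8$-dimensional Cayley line through $\xi$) via Cauchy--Schwarz-type inequalities for $\mu$, showing that extreme eigenvalues of $H$ force large eigenvalues of $H+C$ in directions where the bound is tightest. Finally, the sharper form \eqref{sharp_det_est} needed in Subsection~\ref{s6} follows by quantitatively tightening the estimates in both regimes around the equality case.
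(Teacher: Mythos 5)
Your opening move matches the paper: you rewrite $\hess B_\theta$ via the Cayley-line projection, set $H=\int (dB_\theta)^2\,d\mu$ and $C=\int C_\theta\,d\mu$, note $\tr H=1$, $\tr C=7$, and reduce \eqref{VIP_ineq} to $\det(H)^{1/2}/\det(I-H+C)\leq(\sqrt{16}/22)^{16}$. You also correctly identify the obstacle (no global $J_k$'s) and the general shape of the argument (good/bad dichotomy, with \cite[B.4 Lemme]{BCG1} providing slack in the bad case). However, the proposal is missing the two ideas that actually carry the proof.

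First, you propose fixing an \emph{arbitrary} orthogonal splitting $T_xM=L_1\oplus L_2$ into Cayley lines and tracking ``correction terms'' relative to that basis. The paper instead establishes (Lemma~\ref{octonion_struct}, Property~(P1')) that the operator $\widehat H=H+C$ is constant on each Cayley line, hence has at most two distinct eigenvalues, and when there are two its eigenspaces \emph{are} a pair of orthogonal Cayley lines. This canonically determines $L_1\oplus L_2$ and makes $\widehat H=\mathrm{diag}(\lambda I_8,\mu I_8)$ exactly, with $H=\left(\begin{smallmatrix}A&C\\C^*&B\end{smallmatrix}\right)$ where $A,B$ are diagonal of traces $\lambda,\mu$ (via (P2')). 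Without this, there is no tractable block structure, and your ``explicit correction terms'' are just a hope. Your case split (eigenvalues $\nu_j$ of $H$ near $1/16$) is also not the one the paper uses; the paper's dichotomy is on whether the top eigenvalue of each $8\times 8$ block dominates its block trace ($2\lambda_1\leq\lambda$ and $2\mu_1\leq\mu$, or not), which is what makes the subsequent estimates close. Second, and more seriously, the entire mechanism of the hard case is absent: the paper introduces the off-diagonal interpolant $U_\alpha=\alpha\left(\begin{smallmatrix}0&C\\C^*&0\end{smallmatrix}\right)$, proves log-concavity of $\det$ along $\alpha\mapsto I-H+C$ split as $(I-H-U_\alpha)+(C+U_\alpha)$, and shows (Lemma~\ref{key_lemma}) that one $\alpha$ (namely $\alpha=1/4$) simultaneously satisfies both \eqref{new_part_1} and \eqref{new_part_2}. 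Your ``compare the B.4 slack to the non-associativity error'' is the goal, not a method; without $U_\alpha$ and the explicit reverse-AM-GM bounds (Lemma~\ref{reverse_AM_GM_ineq_1}, Lemma~\ref{fact2}, Lemma~\ref{F_Gmax}) you have no way to quantify either side of that comparison, and ``Cauchy--Schwarz on $\int P_{\Cay(\xi)}\,d\mu$'' does not obviously produce the needed inequalities. As written the proposal stalls exactly where the paper's new content begins.
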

We want to reduce Proposition \ref{ineq_prop} to a pure linear algebra problem. To this end, we first introduce some reductions.
\begin{proof}[Reductions of Proposition \ref{ineq_prop}]
For any $w\in T_x\widetilde M\setminus\{0\}$, we denote by $\pi_{\Cay(w)}:T_x\widetilde M\to T_x\widetilde M$ the orthogonal projection onto $\Cay(w)$. For simplicity we write $\pi_{x,\theta}=\pi_{\Cay(\grad B_\theta(x))}$. In the Cayley hyperbolic space, by \eqref{hess_gen_formula} we can explicitly compute the Hessian of Busemann functions as
$$\hess B_\theta(x)(\cdot,\cdot)=g_0(\cdot,\cdot)-2dB_\theta(\cdot)dB_\theta(\cdot)+g_0(\cdot,\pi_{x,\theta}(\cdot)).$$
We will be using this formula in place of \eqref{hess_easy_case} in the Cayley hyperbolic setting because \eqref{hess_easy_case} does not hold in the Cayley hyperbolic case (see Subsection \ref{s3}). We also note that a corresponding version of this formula is equivalent to \eqref{hess_easy_case} in the complex (or quaternionic) hyperbolic setting. 

To simplify notations, we let
$$H=\int_{\del_\infty M}(dB_\theta)^2(x)d\mu(\theta)$$
and 
$$\widehat{H}=\int_{\del_\infty M}\langle\cdot,\pi_{x,\theta}(\cdot)\rangle d\mu(\theta),$$
where $\langle\cdot,\cdot\rangle$ denotes the inner product induced by $g_0$. If we identify $H$ and $\widehat H$ with their induced self-adjoint linear operators, $H$ is the $\mu$-weighted average of orthogonal projections onto 1-dimensional subspaces spanned by unit vectors and $\widehat H$ is  the $\mu$-weighted average of orthogonal projections onto Cayley lines of unit vectors. Then the inequality \eqref{VIP_ineq} is equivalent to
\begin{align}\label{simplification}
\frac{(\det H)^{1/2}}{\det \left(\id-H+(\widehat{H}-H)\right)}\leq\left(\frac{\sqrt{n}}{n+d-2}\right)^{n}= \left(\frac{\sqrt{16}}{16+8-2}\right)^{16}
\end{align}
where $n=\dim M$ and $d=\dim_\RR\OO=8$. 

\textbf{A remark on \eqref{simplification} in the complex (or quaternionic) hyperbolic setting:} One can similarly define $H$ and $\widehat H$ in the complex (or quaternionic) hyperbolic setting as  the $\mu$-weighted averages of orthogonal projections onto unit vectors and their complex (or quaternionic) lines respectively. In those cases which are already well-understood in \cite[Appendice B]{BCG1}, $\widehat H=\sum_{l=0}^{d-1}J_lHJ_l^{-1}$, where $J_0=\id$ and $J_1,...,J_{d-1}$, $d=2,4$ come from the complex (or quaternionic) structure(s). Here we want to focus on the following two properties of $\widehat H$ and $H$ in the complex (or quaternionic) hyperbolic cases:
\begin{enumerate}
\item[\hypertarget{P1}{(P1).}] Let $v,w$ be unit vectors. If $v,w$ lie in the same complex (or quaternionic) line, then $w^T\widehat H w= v^T\widehat H v$;
\item[\hypertarget{P2}{(P2).}] $\widehat H-H$ is the sum of $d-1$ matrices which are all conjugate to $H$ via orthogonal matrices. Moreover, these orthogonal matrices are given by the complex (or quaternionic) structure on $M$. In particular, these orthogonal matrices map every complex (or quaternionic) line to itself.
\end{enumerate} 

\textbf{Going back to the Cayley hyperbolic case.} Unfortunately, due to the lack of $J_l$ maps compatible with curvature data (see the remark after Corollary \ref{curv_data_OH^2}), the second property described above does not hold for the Cayley hyperbolic setting. Nonetheless, \hyperlink{P1}{(P1)} and a weaker version of \hyperlink{P2}{(P2)} still hold for the Cayley hyperbolic space. To be specific, we have the following lemma.

\begin{lem}\label{octonion_struct}
Let $v$ be any unit vector in $T^1_xM$. We denote by $\widehat{H}|_{\Cay(v)}$ the symmetric bilinear form on $\Cay(v)$ induced by $\widehat H$. Then we have the following properties of $\widehat H$ and $H$.
\begin{enumerate}
\item[\hypertarget{PP1}{(P1').}] $\widehat{H}|_{\Cay(v)}=\lambda_v\cdot g_0|_{\Cay(v)}$, where $\lambda_v$ is a real number related to $v$. In particular, if $v$ is an eigenvector of $\widehat H$, all non-zero vectors in $\Cay(v)$ are eigenvectors of $\widehat H$ with the same eigenvalue. This is the Cayley hyperbolic version of \hyperlink{P1}{(P1)}.
\item[\hypertarget{PP2}{(P2').}] There exists orthogonal linear maps $J_{t,v}\in \mathrm{O}(\Cay(v))$, $t=0,1,...,7$ such that $J_{0,v}=\id$ and 
$$\widehat{H}|_{\Cay(v)}=\sum_{t=0}^7 H|_{\Cay(v)}\circ (J_{t,v}(\cdot),J_{t,v}(\cdot)).$$
This is weaker than an analogue of \hyperlink{P2}{(P2)} in the Cayley hyperbolic setting.
\end{enumerate}
\end{lem}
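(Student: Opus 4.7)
The plan is to prove both (P1') and (P2') by directly exploiting the vector model $T_x\widetilde{M}\isom\OO^2$ from Section \ref{s1} together with the octonion identities (1)--(9) listed there. Both statements ultimately reduce to explicit features of the orthogonal projections between Cayley lines, and both proofs sit entirely at the linear-algebra level on a single tangent space.

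First I would establish the pointwise fact underlying (P1'): for any two Cayley lines $L_1,L_2\subset T_x\widetilde{M}$, the orthogonal projection $\pi_{L_2}|_{L_1}$ is a scaled linear isometry. Writing $L_1=\OO\cdot(1,c_1)$ and $L_2=\OO\cdot(1,c_2)$ (the degenerate case $\OO\cdot(0,1)$ being handled by the same method), one solves the normal equations $\langle(x,xc_1)-(y,yc_2),(z,zc_2)\rangle=0$ for all $z\in\OO$. Using the adjoint relation $\langle a,zc\rangle=\langle a\overline{c},z\rangle$ (a consequence of property (2) together with the alternative identity (6) via $R_c^{-1}=R_{\overline{c}}/|c|^2$), the equation reduces to $y=T(x):=(x+(xc_1)\overline{c_2})/(1+|c_2|^2)$, and property (2) in the form $\langle ab,ac\rangle=|a|^2\langle b,c\rangle$ then yields $|T(x)|^2=k|x|^2$ with $k=(1+2\langle c_1,c_2\rangle+|c_1|^2|c_2|^2)/(1+|c_2|^2)^2$. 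Hence there is $c(L_1,L_2)\geq 0$ with $\langle\pi_{L_2}(u),\pi_{L_2}(w)\rangle=c(L_1,L_2)\langle u,w\rangle$ for all $u,w\in L_1$; by the singular-value symmetry between $\pi_{L_2}|_{L_1}$ and its adjoint $\pi_{L_1}|_{L_2}$, the same constant equals $|\pi_{L_1}(\xi)|^2$ for any unit $\xi\in L_2$. Taking $L_1=\Cay(v)$, $L_2=\Cay(\grad B_\theta(x))$ and integrating against $\mu$ yields (P1') with
\begin{align*}
\lambda_v=\int_{\del_\infty\widetilde{M}}|\pi_{\Cay(v)}(\grad B_\theta(x))|^2\,d\mu(\theta).
\end{align*}

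For (P2'), the plan is to construct the $J_{t,v}$ via octonionic left-multiplication on $\Cay(v)$. Fix any linear isometry $\phi:\OO\xrightarrow{\sim}\Cay(v)$ and, under $\phi$, let $J_{t,v}$ correspond to left-multiplication by the standard basis element $e_t\in\OO$; then $J_{0,v}=\id$ and property (1) gives $J_{t,v}\in\mathrm{O}(\Cay(v))$. Since each $J_{t,v}$ preserves $\Cay(v)$, for $u,w\in\Cay(v)$ one has
\begin{align*}
\sum_{t=0}^{7}H|_{\Cay(v)}(J_{t,v}u,J_{t,v}w)=\int_{\del_\infty\widetilde{M}}\sum_{t=0}^{7}\langle u,J_{t,v}^{-1}p_\theta\rangle\langle w,J_{t,v}^{-1}p_\theta\rangle\,d\mu(\theta),
\end{align*}
where $p_\theta:=\pi_{\Cay(v)}(\grad B_\theta(x))$. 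The crucial observation is that for every $p\in\Cay(v)$ the family $\{J_{t,v}^{-1}p\}_{t=0}^{7}$ is an orthogonal frame in $\Cay(v)$ in which every vector has norm $|p|$: under $\phi$ it corresponds to $\{p,-e_1 p,\ldots,-e_7 p\}\subset\OO$, which is the image of the orthonormal basis $\{1,-e_1,\ldots,-e_7\}$ under right-multiplication $R_p(a)=ap$, and property (2) in the form $\langle bp,cp\rangle=|p|^2\langle b,c\rangle$ exhibits $R_p$ as a scaled isometry. Expanding $\langle u,w\rangle$ in this orthogonal frame yields $\sum_t\langle u,J_{t,v}^{-1}p_\theta\rangle\langle w,J_{t,v}^{-1}p_\theta\rangle=|p_\theta|^2\langle u,w\rangle$, and comparison with the formula for $\lambda_v$ above produces (P2').

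The main conceptual subtlety is that there is no canonical identification $\Cay(v)\isom\OO$, so the $J_{t,v}$ arising from different choices of $\phi$ satisfy different algebraic identities among themselves; unlike in the complex or quaternionic settings, one cannot promote the local picture above to a globally defined almost-complex/quaternionic structure on $T\widetilde{M}$. Fortunately the lemma only demands the existence of some family $\{J_{t,v}\}$, and the two features actually needed --- membership in $\mathrm{O}(\Cay(v))$ and the frame identity for $\{J_{t,v}^{-1}p\}$ --- are intrinsic to the octonionic structure on $\Cay(v)$ and independent of the choice of $\phi$.
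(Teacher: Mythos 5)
Your proof is correct, and the key octonionic ingredients are the same as the paper's --- left multiplications $L_{e_t}$ give the orthogonal maps $J_{t,v}$, and the fact that right multiplication $R_p$ is a scaled isometry (property (2)) supplies the orthogonal frame used to expand $\langle u,w\rangle$. The organization, however, differs at both stages. For (P1'), the paper uses $\Spin(9)$-transitivity on $T_x^1M$ to normalize $v=\Psi_x(1,0)$ and $w=\Psi_x(b\cos\theta,ba\sin\theta)$, then writes down $\pi_{\Cay(w)}(v_c)$ explicitly and reads off $\langle v_c,\pi_{\Cay(w)}(v_c)\rangle=\cos^2\theta$; you instead derive the general projection formula $\pi_{L_2}|_{L_1}$ via normal equations and the adjoint identity $R_c^*=R_{\overline c}$, showing directly that it is a scaled isometry and identifying $\lambda_v$ through singular-value duality. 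For (P2'), the paper works in $\Cay(w)$ with the orthonormal basis $w_t=\Psi_x(\overline e_t b\cos\theta,(\overline e_t b)a\sin\theta)$ and the octonion identity $\langle c,\overline e_t b\rangle=\langle e_t c,b\rangle$, establishing $\langle v',\pi_{\Cay(w)}v'\rangle=\sum_t\langle J_{t,v}v',\pi_w(J_{t,v}v')\rangle$ before integrating; you work dually inside $\Cay(v)$ with the projected vector $p_\theta$ and the orthogonal frame $\{J_{t,v}^{-1}p_\theta\}$, and derive (P2') by combining the frame identity with (P1'). Your route avoids the $\Spin(9)$ normalization at the cost of a somewhat longer explicit computation, and makes it clear that the family $\{J_{t,v}\}$ is only canonical up to the choice of isometry $\phi:\OO\to\Cay(v)$ --- a point worth having spelled out, since it is precisely why no global quaternionic-style structure is available.
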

\begin{proof}[Proof of Lemma \ref{octonion_struct}]

\begin{enumerate}
\item[(P1').] Since $\widehat{H}$ is a weighted average of 2-forms $\langle\cdot,\pi_{\Cay(w)}(\cdot)\rangle$, where $w\in T^1_x M$, it suffices to prove the conclusion for $\langle\cdot,\pi_{\Cay(w)}(\cdot)\rangle$. It is natural to consider the octonionic coordinates $\Psi_x:\OO^2\to T_xM$ introduced at the end of Section \ref{s1}. Since $\mathrm{Stab}_x(\mathrm{Isom}(M_0))$ acts transitively on $T_x^1M_0$, we can assume without loss of generality that $v=\Psi_{x}(1,0)$ and $w=\Psi_{x}(b\cos\theta, ba\sin\theta)$, where $\theta\in\RR$ and $a,b$ are unit octonions. Then for any unit octonion $c$ and any $v_c=\Psi_{x}(c,0)$, we have
$$\pi_{\Cay(w)}(v_c)=\Psi_{x}(c\cos^2\theta,ca\sin\theta\cos\theta).$$
Hence
\begin{align*}
\langle v_c,\pi_{\Cay(w))}(v_c)\rangle=\langle (c,0),(c\cos^2\theta,ca\sin\theta\cos\theta)\rangle=\cos^2\theta,
\end{align*}
The first assertion follows since the above inner product is independent of $c$.
\item[(P2').] Let $w\in T^1_xM_0$. Similar to the proof of the first assertion, we can assume without loss of generality that $v=\Psi_{x}(1,0)$ and $w=\Psi_{x}(b\cos\theta, ba\sin\theta)$, where $\theta\in\RR$ and $a,b$ are unit octonions. Also it suffices to prove the conclusion for $\langle\cdot,\pi_{\Cay(w)}(\cdot)\rangle$.
Let $e_0=1,e_1,...,e_7$ be the standard orthonormal basis for $\OO$. Then 
$$\{w_t:=\Psi_{x}(\overline{e}_tb\cos\theta,(\overline{e}_tb)a\sin\theta):0\leq t\leq 7\}$$ 
is an orthonormal basis for $\Cay(w)$. Let $\pi_u:T_xM\to T_xM$ be the orthogonal projection onto $\RR u$ for any non-zero vector $u\in T_xM$. Then
$$\langle v',\pi_{\Cay(w)}(v')\rangle=\sum_{t=0}^7\langle v',\pi_{w_t}v'\rangle, \forall v'\in\Cay(v).$$
Notice that any $v'$ has the form $\Psi_{x}(c,0)$ for some unit octonion $c$ and
\begin{align*}
&\langle v',\pi_{w_t}v'\rangle\\
=&\langle\Psi_{x}(c,0),\Psi_{x}(\overline{e}_tb\cos\theta,(\overline{e}_tb)a\sin\theta)\rangle^2 \\
=&\langle c,\overline{e}_tb\cos\theta\rangle^2 \\
=&\langle e_tc,b\cos\theta\rangle^2 \\
=&\langle \Psi_{x}(e_tc,0),w\rangle^2 
=\langle \Psi_{x}(e_t\Psi_{x}^{-1}(v')),\pi_w(\Psi_{x}(e_t\Psi_{x}^{-1}(v')))\rangle,\quad 0\leq t\leq 7.
\end{align*}
Choose $J_{t,v}(\cdot)=\Psi_{x}(e_t\Psi_{x}^{-1}(\cdot))$ and the second assertion follows.\qedhere
\end{enumerate}
\end{proof}

Back to the proof of Proposition \ref{ineq_prop}, \hyperlink{P1'}{(P1')} suggests that $\widehat{H}$ has at most 2 eigenvalues $\lambda,\mu\geq 0$ with $\lambda+\mu=1$. When $\lambda\neq\mu$, the two eigenspaces are a pair of Cayley lines perpendicular to each other. This is because the largest and the smallest eigenvalue of $\widehat{H}$ can be defined as
$$\lambda_{\max}=\max_{v\in T^1_xM}\widehat{H}(v,v)\mathrm{~and~}\lambda_{\min}=\min_{v\in T^1_xM}\widehat{H}(v,v).$$ 
A unit vector $v\in T^1_xM$ is an eigenvector  of $\widehat H$ corresponding to the largest (smallest resp.) eigenvalue if and only if $\widehat H(v,v)$ equals to the largest (smallest resp.) eigenvalue. Hence \hyperlink{P1'}{(P1')} implies that the eigenspace of $\widehat H$ corresponding to the largest (smallest resp.) eigenvalue must contain all Cayley lines which have non-trivial intersections with it.
Therefore under a suitable orthonormal basis $H$ and $\widehat{H}$ have matrices of the form
$$H=\matii{A&C}{C^*&B}\mathrm{~~and~~}\widehat{H}=\matii{\lambda\cdot\id&0}{0&\mu\cdot\id},$$
where $A, B$ are $8\times 8$ diagonal matrices with trace equal to $\lambda,\mu$ respectively. The trace data of $A$ and $B$ follows from \hyperlink{P2'}{(P2')}.  In particular, 
\begin{align}\label{matrix_formulation}H=\matii{A&C}{C^*&B}\mathrm{~and~}\widehat H-H=\matii{\lambda\cdot\id-A&-C}{-C^*&\mu\cdot\id-B}\end{align}
are positive semi-definite.

Before we outline the main idea of the proof for \eqref{simplification} in the Cayley hyperbolic setting, we first recall Besson-Courtois-Gallot's method in proving the complex (or quaternionic) hyperbolic version of \eqref{simplification}. (See \cite[B.3 Lemme]{BCG1}.) 

\textbf{\hypertarget{BCG's proof sketch}{Besson-Courtois-Gallot's proof} for \eqref{simplification} in the complex (or quaternionic) hyperbolic setting: }We first apply the log-concavity of the determinant function for positive semi-definite matrices (see \cite[B.2 Lemme]{BCG1}) and obtain
\begin{align}\label{old_trick}
\det(\id-H+(\widehat{H}-H))\geq (n+d-2)^{n}\det\left(\frac{\id-H}{n-1}\right)^{\frac{n-1}{n+d-2}}\det\left(\frac{\widehat{H}-H}{d-1}\right)^{\frac{d-1}{n+d-2}},
\end{align}
where $n$ is the dimension of a complex (or quaternionic) hyperbolic space and $d=2$ (or $4$). It follows from \cite[B.4 Lemme]{BCG1} that 
\begin{align}\label{old_part_1}
\det((\id-H)/(n-1))\geq n^{(1-n)}\det(H)^{1/n}\geq n^{-\frac{n(n+d-2)}{2(n-1)}}\det(H)^{\frac{n-d}{2(n-1)}}
\end{align}
 for any positive semi-definite matrix $H$ with $\tr H=1$. The second inequality in \eqref{old_part_1} follows from $\det(H)\leq n^{-n}$ as an application of the arithmetric mean-geometric mean inequality. By Property \hyperlink{P2}{(P2)} for the complex (or quaternionic) hyperbolic setting and again the log-concavity of determinant function for positive semi-definite matrices, we have
\begin{align}\label{old_part_2}\det((\widehat H-H)/(d-1))\geq \det(H).\end{align}
Therefore the complex (or quaternionic) hyperbolic version of \eqref{simplification} follows from \eqref{old_trick}, \eqref{old_part_1} and \eqref{old_part_2}.

\textbf{Going back to the Cayley hyperbolic setting.} The argument above does not work directly for the Cayley hyperbolic case due to the lack of Property \hyperlink{P2}{(P2)}. Namely, it is not obvious that $\det((\widehat{H}-H)/(d-1))=\det((\widehat{H}-H)/7)\geq\det H$ ($d=\dim_\RR\OO=8$) because we cannot write $\widehat{H}-H$ as the sum of $7$ conjugates of $H$. (See the remark after Corolllary \ref{curv_data_OH^2} or Subsection \ref{s3}.) However, we can make some adjustments to the above argument so that it can also work (with a lot more technical details) in the Cayley hyperbolic case. Set 
$$U=\matii{0&C}{C^*&0}\mathrm{~and~}U_\alpha=\alpha U.$$
for some $0\leq \alpha\leq1$ to be determined. By the matrix expressions \eqref{matrix_formulation} for $H$ and $\widehat H-H$, we can apply the arithmetric mean-geometric mean inequality and obtain
\begin{align}\label{?}\det\left[\frac{\widehat H-(H-U)}{7}\right]=&\det\left(\frac{1}{7}\matii{\lambda\cdot\id-A&0}{0&\mu\cdot\id-B}\right) \nonumber\\
\geq&\det\matii{A&0}{0& B}\geq\det\matii{A&C}{C& B}=\det(H).
\end{align}
In other words, the main reason \eqref{old_part_2} is not clear in the Cayley hyperbolic setting comes from the ``troublesome'' $U$. Moreover, since both $H$ and $\widehat H$ are positive semi-definite, one can verify that 
\begin{align}\label{??}\det(\widehat H-H+U_\alpha)=\det(\lambda\cdot\id-A)\det[\mu\cdot\id-B-(1-\alpha)^2C^*(\lambda\cdot\id-A)^{-1}C]\end{align} 
is an increasing function for $0\leq \alpha\leq 1$. This is because of the following linear algebra fact which follows from computations. We will use this fact extensively in the rest of this paper.
\begin{fact}\label{FACT}
Let $Q_1, Q_2$ be $n\times n$ symmetric matrices and $P$ an $n\times n$ matrix. Assume that $Q_1$ is invertible. Then
$$\matii{Q_1& P}{P^*& Q_2}=\matii{\id&0}{P^*Q_1^{-1}&\id}\matii{Q_1&0}{0&Q_2-P^*Q_1^{-1}P}\matii{\id&Q_1^{-1}P}{0&\id}.$$
Hence $\matii{Q_1& P}{P^*& Q_2}$ is positive (semi-)definite if and only if both $Q_1$ and $Q_2-P^*Q_1^{-1}P$ are positive (semi-) definite. Also we have the determinant equality
$$\det\matii{Q_1& P}{P^*& Q_2}=\det(Q_1)\det(Q_2-P^*Q_1^{-1}P).$$
\end{fact}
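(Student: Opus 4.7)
The plan is to verify the claimed block identity by direct matrix multiplication; the positive-(semi-)definiteness criterion and determinant formula will then follow as formal corollaries of the factorization. Write the proposed right-hand side as $LDL^*$, where
\[
L = \matii{\id & 0}{P^* Q_1^{-1} & \id}, \quad D = \matii{Q_1 & 0}{0 & Q_2 - P^* Q_1^{-1} P}, \quad L^* = \matii{\id & Q_1^{-1} P}{0 & \id}.
\]
The expression for $L^*$ uses the symmetry of $Q_1$, which gives $(Q_1^{-1})^* = Q_1^{-1}$ and hence $(P^* Q_1^{-1})^* = Q_1^{-1} P$.

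First I would compute $DL^*$, whose $(1,2)$-block is $Q_1 \cdot Q_1^{-1} P = P$, producing the upper block-triangular matrix $\matii{Q_1 & P}{0 & Q_2 - P^* Q_1^{-1} P}$. Left-multiplying by $L$ then fills in the $(2,1)$-block $P^* Q_1^{-1} \cdot Q_1 = P^*$ and, via the cancellation $P^* Q_1^{-1} P + (Q_2 - P^* Q_1^{-1} P) = Q_2$, recovers the claimed left-hand side. All of this is routine bookkeeping.

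From the factorization the remaining assertions are immediate. Since $L$ is unit lower-triangular as a block matrix, it is invertible with $\det L = \det L^* = 1$, and the substitution $\mathbf{u} = L^* \mathbf{v}$ is a linear automorphism of $\RR^{2n}$ under which the quadratic form of the full block matrix becomes $\mathbf{u}^* D \mathbf{u}$. Hence the block matrix is positive (semi-)definite iff $D$ is, which in turn holds iff both diagonal blocks $Q_1$ and $Q_2 - P^* Q_1^{-1} P$ are positive (semi-)definite. Multiplicativity of $\det$ gives $\det\matii{Q_1 & P}{P^* & Q_2} = \det(Q_1)\det(Q_2 - P^* Q_1^{-1} P)$. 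There is no genuine obstacle here: this is the classical Schur-complement decomposition, and the only point of care is invoking the symmetry of $Q_1$ when transposing the off-diagonal block so that the decomposition is of the conjugate form $LDL^*$.
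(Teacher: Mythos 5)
Your proof is correct and takes essentially the same route the paper intends: the paper states only that the fact ``follows from computations,'' and your verification of the $LDL^*$ factorization by direct block multiplication, followed by the congruence and determinant-multiplicativity corollaries, is precisely the standard Schur-complement argument being alluded to.
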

From \eqref{?} and \eqref{??} one can expect that for some $0\leq \alpha\leq 1$ which is likely close to $1$, we have
\begin{align}\label{new_part_2}
\det[(\widehat{H}-H+U_\alpha)/7]\geq \det(H).
\end{align}
Recall that it is not clear whether \eqref{old_part_2} holds in the Cayley hyperbolic setting. We will use \eqref{new_part_2} instead of \eqref{old_part_2} for our proof of \eqref{simplification}. 

We will also use the log-concavity of the determinant function for positive semi-definite matrices similar to \eqref{old_trick}. Namely, 
\begin{align}\label{new_trick}\det(\id-H+(\widehat{H}-H))\geq 22^{16}\det((\id-H-U_\alpha)/15)^{15/22}\det((\widehat{H}-H+U_\alpha)/7)^{7/22}.\end{align}
This inequality \eqref{new_trick} will play a similar role in the proof of Proposition \ref{ineq_prop} as \eqref{old_trick} does in \hyperlink{BCG's proof sketch}{Besson-Courtois-Gallot's proof} for \eqref{simplification} in the complex (or quaternionic) hyperbolic setting. (See \eqref{old_part_1}, \eqref{old_part_2} and \eqref{old_trick}.) The last remaining building-block which corresponds to \eqref{old_part_1} is to prove 
\begin{align}\label{new_part_1}\det(\id-H-U_\alpha)\geq\frac{15^{16}}{16^{16(1-4/15)}}\det(H)^{4/15}\end{align}
for the same $\alpha$ in \eqref{new_part_2}. Given \eqref{old_part_1} which is the same as \eqref{new_part_1} in the Cayley hyperbolic setting when $\alpha=0$, one might guess that this $\alpha$ should be close to $0$. We summarize all the reductions discussed above as the following idea of the proof for \eqref{simplification} and hence for Proposition \ref{ineq_prop}.
\end{proof}
\begin{proof}[Idea of the proof for Proposition \ref{ineq_prop}]
Recall that proving \eqref{simplification} is equivalent to proving Proposition \ref{ineq_prop}.  Notice that \eqref{new_trick} holds for any $\alpha\in [0,1]$. If there exists some $\alpha\in[0,1]$ such that \eqref{new_part_1} and \eqref{new_part_2} hold, then \eqref{simplification} is a direct corollary of \eqref{new_trick}, \eqref{new_part_1} and \eqref{new_part_2}. Therefore it remains for us to solve the following linear algebra problem: \textbf{Finding an $\alpha\in [0,1]$ such that both \eqref{new_part_1} and \eqref{new_part_2} hold.} 
\end{proof}
Finding such an $\alpha$ is also the main difficulty in this proof since naively we expect that \eqref{new_part_1} holds when $\alpha$ is ``close'' to $0$ and \eqref{new_part_2} holds when $\alpha$ is ``close'' to $1$. Therefore we need to verify the existence of an $\alpha$ which is ``close'' to both $0$ and $1$ in the above sense. 

Lemma \ref{key_lemma} in Subsection \ref{s5} confirms the existence of such an $\alpha$ and hence completes the proof of Proposition \ref{ineq_prop}.
\subsection{Repairing the gap: a linear algebra lemma}\label{s5}
Recall that from the end of Subsection \ref{s4}, it remains for us to find an $\alpha\in[0,1]$ such that \eqref{new_part_1} and \eqref{new_part_2} hold. This section gives a positive answer to the existence of such an $\alpha$. This can be summarized as the following linear algebra lemma.
\begin{lem}\label{key_lemma}
Following the notations from Subsection \ref{s4}, there exists some $0\leq\alpha\leq 1$ such that
\begin{enumerate}
\item[(1).] $\det(\widehat{H}-H+U_\alpha)\geq 7^{16}\det(H)$ (see also \eqref{new_part_2});
\item[(2).] $\det(\id-H-U_\alpha)\geq\frac{15^{16}}{16^{16(1-4/15)}}\det(H)^{4/15}$ (see also \eqref{new_part_1}).
\end{enumerate}
\end{lem}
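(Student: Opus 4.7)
The strategy exploits the opposite monotonicity of the two determinants in $\alpha$. Applying Fact \ref{FACT} to the block form in \eqref{matrix_formulation} gives
\begin{align*}
\det(\widehat{H}-H+U_\alpha) &= \det(\lambda\id - A)\det\bigl[(\mu\id - B) - (1-\alpha)^2 C^*(\lambda\id - A)^{-1}C\bigr],\\
\det(\id - H - U_\alpha) &= \det(\id - A)\det\bigl[(\id - B) - (1+\alpha)^2 C^*(\id - A)^{-1}C\bigr],
\end{align*}
so inequality (1) is non-decreasing in $\alpha$ while inequality (2) is non-increasing. The plan is therefore to locate a single $\alpha\in[0,1]$ at which both hold simultaneously.

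A preliminary observation is that (1) is automatic at $\alpha=1$: the Schur-complement bound yields $\det(H)\le\det(A)\det(B)$, and a Lagrange-multiplier computation on $\prod_i(\lambda/a_i-1)$ subject to $\tr A=\lambda$ (and analogously for $B$) shows the minimum of $\det(\lambda\id-A)\det(\mu\id-B)/(\det(A)\det(B))$ is exactly $7^{16}$, attained when all eigenvalues are $\lambda/8$ and $\mu/8$. At the other end, (2) at $\alpha=0$ is precisely the estimate supplied by \cite[B.4 Lemme]{BCG1} in BCG's original argument. The task is thus to interpolate between these two endpoints.

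I would organize the argument by a dichotomy on how far the eigenvalues of $A$ and $B$ are from their averages $\lambda/8$ and $\mu/8$. In the \textbf{good case}, where every eigenvalue lies in an explicit window around its average, the AM--GM slack in the identity above is large enough to absorb the correction $(1-\alpha)^2 C^*(\lambda\id-A)^{-1}C$ even at $\alpha$ close to $0$; so (1) holds at $\alpha=0$ with room to spare, and (2) follows from a small modification of BCG's proof. In the \textbf{bad case}, where some eigenvalue is extreme, I would take $\alpha$ to be the largest value in $[0,1]$ for which (2) still holds, and verify (1) at that $\alpha$. The key input is that \cite[B.4 Lemme]{BCG1} provides \emph{extra} slack in (2) precisely when some eigenvalue of $H$ is far from $1/16$, which is the regime we are in; this slack lets us push $\alpha$ closer to $1$ while (1) becomes easier by monotonicity.

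The main difficulty is the bad case. The failure of property \hyperlink{P2}{(P2)} in the Cayley hyperbolic setting prevents writing $\widehat{H}-H$ as a sum of seven orthogonal conjugates of $H$, so the clean inequality $\det((\widehat{H}-H)/7)\geq\det(H)$ available in the complex and quaternionic cases is unavailable. The auxiliary term $U_\alpha$ is introduced precisely to restore a usable lower bound via \eqref{?}, and the delicate point is to show that the factor $(1+\alpha)^2$ appearing in (2) does not devour all of the extreme-eigenvalue gain from \cite[B.4 Lemme]{BCG1}. Quantifying this balance using the constraints $\tr A=\lambda$, $\tr B=\mu$, $\lambda+\mu=1$, the positive semi-definiteness of $H$ and $\widehat{H}-H$ (in particular $C^*(\lambda\id-A)^{-1}C\le\mu\id-B$ via Fact \ref{FACT}), and the Schur-complement structure is the main technical content of the lemma.
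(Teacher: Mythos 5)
Your high-level strategy — monotonicity of $\det(\widehat H-H+U_\alpha)$ upward and of $\det(\id-H-U_\alpha)$ downward in $\alpha$, a dichotomy on whether the top eigenvalues $\lambda_1,\mu_1$ are far from $\lambda/8,\mu/8$, treating (1) as easy near $\alpha=1$ and (2) as easy near $\alpha=0$, and leveraging the extra slack that \cite[B.4 Lemme]{BCG1} supplies in (2) when $H$ is far from $\id/16$ — is exactly the skeleton of the paper's proof, and your preliminary checks (that (1) holds at $\alpha=1$ via the Schur-complement bound $\det H\le\det A\det B$ together with the AM--GM bound $\det(\lambda\id-A)\det(\mu\id-B)\ge 7^{16}\det A\det B$, and that (2) holds at $\alpha=0$ via \cite[B.4 Lemme]{BCG1}) are correct. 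But these two endpoint observations, together with the opposite monotonicities, do \emph{not} by themselves give a common $\alpha$: monotonicity only tells you that the set of admissible $\alpha$ for (1) is an interval $[\alpha_1,1]$ and for (2) is an interval $[0,\alpha_2]$, and the whole content of the lemma is the quantitative fact that $\alpha_1\le\alpha_2$. Your write-up explicitly defers this (``Quantifying this balance\ldots{} is the main technical content of the lemma''), so what you have is a strategy, not a proof.

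Concretely, what is missing is everything in the paper's Case~2: (i) a usable lower bound on the left side of (1), which the paper obtains via the trace/smallest-eigenvalue estimates (E1), (E2) feeding into Lemma~\ref{reverse_AM_GM_ineq_1}; (ii) a matching upper bound on $\det(B-C^*A^{-1}C)$ via the maximization Lemma~\ref{fact2}; (iii) the deduction that (1) holds for all $\alpha\in[1-\sqrt{2/3},\,1/2]$; (iv) the comparison $\det(\id-H)/\det(\id-H-U_\alpha)\le[(9-(1+\alpha)^2)/8]^{-8}$ coming from the positivity of $\id-H-2U$; (v) the refined maximization of $F$ in Lemma~\ref{F_Gmax} under the constraint $\nu_1\ge\nu_{10}+\cdots+\nu_{16}$, which is what turns the qualitative ``extra slack'' you invoke into the numerical bound $L(1/22)$; and (vi) the arithmetic verifying $-1+\sqrt{9-8L(1/22)^{1/8}}>1-\sqrt{2/3}$, which is what makes $\alpha=1/4$ work. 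Without (i)--(vi) there is no argument that the two admissible intervals intersect, and there is no a priori reason they must: the factor $(1+\alpha)^2$ in (2) really could have eaten all the B.4 slack if the constants had come out differently. So the proposal identifies the right route but has a genuine gap at its center — it asserts rather than establishes the existence of the common $\alpha$.
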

\begin{proof}
Let $\lambda_1\geq...\geq\lambda_8\geq0$ and $\mu_1\geq...\geq\mu_8$ be eigenvalues (and therefore the diagonal elements) of $A$ and $B$ respectively. 

Without loss of generality we assume that 
$$A=\matiii{\lambda_1&&}{&\ddots&}{&&\lambda_8}\mathrm{~and~}B=\matiii{\mu_1&&}{&\ddots&}{&&\mu_8}.$$
Write $C=(c_{ij})_{1\leq i,j\leq8}$. Denote by $C_k=(c_{ki}c_{kj})_{1\leq i,j\leq 8}$ with $1\leq k\leq 8$. Then for any diagonal matrix $D=\mathrm{diag}(d_1,...,d_8)$, we have 
\begin{align}\label{decomp_C}C^*DC=\left(\sum_{k=1}^8c_{ki}d_kc_{kj}\right)_{1\leq i,j\leq 8}=\sum_{k=1}^8 d_kC_k.\end{align}
When $C_k\neq 0$, we can think of $C_k/\tr C_k$ as the matrix for the orthogonal projection onto the $k$-th row vector of $C$, where $(\tr C_k)^{1/2}$ equal to the norm of this vector. It is clear that $C_k$ has rank at most $1$. Then \eqref{decomp_C} states the linear algebra fact that any matrix of the form $C^*DC$ with $D$ diagonal is a linear combination of orthogonal projections onto the row vectors of $C$. Applying Fact \ref{FACT} and \eqref{decomp_C} to $H$ and $\widehat{H}-H$ yields
\begin{align}\label{**}B-C^*A^{-1}C=B-\left(\sum_{k=1}^8C_k/\lambda_k\right)\geq 0\end{align}
and 
\begin{align}\label{***}\mu\cdot\id-B-C^*(\lambda-A)^{-1}C=\mu\cdot\id-B-\left(\sum_{k=1}^8C_k/(\lambda-\lambda_k)\right)\geq 0.\end{align}

The rest of the proof is divided into two cases. We will first discuss the ``good case''.

\textbf{Motivations for the ``good case'': Find conditions on $A,B$ when $\alpha=0$ works.} 

Recall that in \hyperlink{BCG's proof sketch}{Besson-Courtois-Gallot's proof} for \eqref{simplification} in the complex (or quaternionic) hyperbolic setting, (see \eqref{old_part_1}, \eqref{old_part_2} and \eqref{old_trick}), we can choose $\alpha=0$ and the corresponding version of Lemma \ref{key_lemma} holds. We want to first discuss to what extent $\alpha=0$ works in the Cayley hyperbolic setting. Notice that \eqref{old_part_1} (see \cite[B.4 Lemme]{BCG1}) is true for any positive semi-definite matrix $H$ with trace 1, the second inequality in Lemma \ref{key_lemma} always holds when $\alpha=0$. Therefore it suffices to find the conditions when the first inequality holds and $\alpha=0$ simultaneously. By Fact \ref{FACT} we have
\begin{align}\label{H_det_decomp}
\det(H)=\det(A)\det(B-C^*A^{-1}C)
\end{align}
and 
\begin{align}\label{hatH-H_det_decomp}
\det(\widehat H-H)=\det(\lambda\cdot\id-A)\det(\mu\cdot\id-B-C^*(\lambda\cdot\id-A)^{-1}C).
\end{align}
Since $A$ and $B$ are both positive semi-definite and have trace $\lambda$ and $\mu$ respectively, one can apply the arithmetic mean-geometric mean inequality and obtain
\begin{align}\label{am_gm_good}\det(\lambda\cdot\id-A)\geq 7^8\det(A)\mathrm{~and~}\det(\mu\cdot\id-B)\geq 7^8\det(B).\end{align}
Therefore if we can prove that
$$\frac{\det(\widehat H-H)}{\det(\lambda\cdot\id-A)\det(\mu\cdot\id-B)}\geq\frac{\det(H)}{\det(A)\det(B)},$$
then the first inequality of Lemma \ref{key_lemma} holds when $\alpha=0$. This is equivalent to proving
\begin{align}\label{<<<}  
\frac{\det(\mu\cdot\id-B-C^*(\lambda\cdot\id-A)^{-1}C)}{\det(\mu\cdot\id-B)}\geq\frac{\det(B-C^*A^{-1}C)}{\det(B)}.
\end{align}
One can naively expect \eqref{<<<} to be true if $\mu\cdot\id-B$ is more positive than $B$ and $C^*(\lambda\cdot\id-A)^{-1}C$ is less positive than $C^*A^{-1}C$. This comes from the observation that if all the matrices in \eqref{<<<} are of size $1\times 1$, then it reduces to the elementary fact that $(a-d)/a\geq (b-c)/c$ for any positive numbers $a\geq b\geq c\geq d>0$. Therefore we discuss the following ``good'' case.

\textbf{Case 1:} If $2\lambda_1\leq\lambda$ and $2\mu_1\leq \mu$, then  we have $\mu\cdot\id - B\geq B$ and 
$$C^*A^{-1}C=\left(\sum_{k=1}^8C_k/\lambda_k\right)\geq\left(\sum_{k=1}^8C_k/(\lambda-\lambda_k)\right)=C^*(\lambda\cdot\id-A)^{-1}C,$$
where two symmetric matrices $Q_1\geq Q_2$ if and only if $Q_1-Q_2$ is positive semi-definite. Therefore
\begin{align}\label{*}0\leq \mu\cdot\id-B-\left(\sum_{k=1}^8C_k/\lambda_k\right)\leq\mu\cdot\id-B-\left(\sum_{k=1}^8C_k/(\lambda-\lambda_k)\right).\end{align}
Hence
\begin{align}\label{half_done_ineq_1}
\frac{\det(B-C^*A^{-1}C)}{\det(\mu\cdot\id-B-C^*(\lambda\cdot\id-A)^{-1}C)} 
\leq\frac{\det(B-C^*A^{-1}C)}{\det(\mu\cdot\id-B-C^*A^{-1}C)}
\end{align}
Before we finish the estimate, we introduce a linear algebra lemma.
\begin{lem}\label{easy_lin_alg}
Let $H_1\geq H_2\geq W\geq 0$ be $n\times n$ positive semi-definite matrices. Then
$$\det(H_2-W)\det(H_1)\leq\det(H_1-W)\det(H_2).$$
In particular, if $n=1$, then we have the naive inequality $(H_2-W)H_1\leq(H_1-W)H_2$ for any non-negative real numbers $H_1\geq H_2\geq W\geq 0$.
\end{lem}
\begin{proof}[Proof of Lemma \ref{easy_lin_alg}]
If one of $H_1$, $H_2$ is singular then the result is trivial. Without loss of generality we assume that both $H_1,H_2$ are invertible and $H_1=\id$. Since $W$ can be written as the sum of finitely many rank $1$ matrices, we can also assume WLOG that $W=\mathrm{diag}\{c,0,...,0\}$. Write $H_2=(h_{ij})_{1\leq i,j,\leq n}$. Define 
$$\widehat{H}_2=\mativ{h_{11}&0&...&0}{0&h_{22}&...&h_{2n}}{\vdots&\vdots&\ddots&\vdots}{0&h_{n2}&...&h_{nn}}.$$
Then $\det{H_2}\leq\det{\widehat{H}_2}$ and 
$$\det(\widehat{H}_2)-\det(\widehat{H}_2-W)=\det(H_2)-\det(H_2-W)$$
Hence
\begin{align*}\frac{\det(H_2-W)}{\det(H_2)}\leq\frac{\det(\widehat{H}_2-W)}{\det(\widehat{H}_2)}=\frac{h_{11}-c}{h_{11}}\leq\frac{1-c}{1}=\frac{\det(H_1-W)}{\det(H_1)}.\tag*{\qedhere}\end{align*}
\end{proof}
Choose $H_1=\mu\cdot\id-B$, $H_2=B$ and $W=C^*A^{-1}C$ in Lemma \ref{easy_lin_alg}, following \eqref{am_gm_good} and \eqref{half_done_ineq_1} we have
\begin{align*}
\frac{\det(B-C^*A^{-1}C)}{\det(\mu\cdot\id-B-C^*(\lambda-A)^{-1}C)} \leq\frac{\det(B)}{\det(\mu\cdot\id-B)}\leq 7^{-8}.
\end{align*}
Hence by \eqref{am_gm_good},
\begin{align}\label{good_case}
\frac{\det(H)}{\det(\widehat{H}-H)}\leq\frac{\det(A)\det(B)}{\det(\lambda\cdot\id-A)\det(\mu\cdot\id-B)}\leq 7^{-16}.
\end{align}
Choose $\alpha=0$ and the proof of Lemma \ref{key_lemma} in this case follows from \cite[B.3-B.4 Lemmes]{BCG1}.

It remains for us to prove Lemma \ref{key_lemma} in the following ``bad'' case.

\textbf{\hypertarget{case2}{Case 2}:} If $2\lambda_1>\lambda$ or $2\mu_1>\mu$, without loss of generality we assume that $2\mu_1>\mu$. This case is trickier since $\mu\cdot\id-B$ is no longer more positive than $B$ and $C^*(\lambda\cdot\id-A)^{-1}C$ is no longer less positive than $C^*A^{-1}C$. Therefore the arguments in the previous case which are slight adjustments to \hyperlink{BCG's proof sketch}{Besson-Courtois-Gallot's approach} (see also \cite[B.3-B.4 Lemmes]{BCG1}) do not work in this case. Hence choosing $\alpha=0$ may not work for \eqref{new_trick} and Lemma \ref{key_lemma}. ($\alpha=0$ could still work in this case but it is not obvious how to verify it.)  Another reason why this case is ``bad'' (or actually ``good'' depending on one's perspective) comes from the following observation: Equality for the first inequality in Lemma \ref{key_lemma} holds when $\lambda_1=...=\lambda_8$, $\mu_1=...=\mu_8$ and $C=0$ for every $\alpha$. The assumptions on eigenvalues $\lambda_j$'s and $\mu_j$'s for $A$ and $B$ in this case are very far from the above mentioned equality conditions. Therefore one can expect that too much difference between eigenvalues of $A$ and $B$ will be the main reason why the inequalities in Lemma \ref{key_lemma} hold in this case. In addition to that, one can expect that equalities will not be achieved here.

Since we need to find an $\alpha$ satisfying $2$ inequalities in Lemma \ref{key_lemma}, we split the rest of the proof into $2$ steps.

\textit{Step 1: Find the conditions for $\alpha$ such that the first inequality in Lemma \ref{key_lemma} holds in} \textbf{\hyperlink{case2}{Case 2}}.

By Fact \ref{FACT}, we have
$$\det(\widehat H-H+U_\alpha)=\det(\lambda\cdot\id-A)\det(\mu\cdot\id-B-(1-\alpha)^2C^*(\lambda\cdot\id-A)^{-1}C).$$
Recall that by \eqref{am_gm_good} we have $\det(\lambda\cdot\id-A)\geq7^8\det(A).$ In order to satisfy the first requirement, it suffices to find $\alpha$ such that
\begin{align}
\label{ineq1}\det(\mu\cdot\id-B-(1-\alpha)^2C^*(\lambda\cdot\id-A)^{-1}C)\geq 7^8\det(B-C^*A^{-1}C).
\end{align}
 
\textbf{Main idea to find a lower bound for the left hand side of \eqref{ineq1}:} Usually there is no obvious way to give a non-trivial lower bound on the determinant of a positive semi-definite matrix (e.g. the left hand side of \eqref{ineq1}). However we notice that the matrix $\mu\cdot\id-B-(1-\alpha)^2C^*(\lambda\cdot\id-A)^{-1}C$ has lower bounds on its trace and its smallest eigenvalue. If we view its determinant as a function on its eigenvalues, then this function will have small outputs when all the eigenvalues deviate a lot from their average. This deviation is controlled because of the bounds on the trace and the smallest eigenvalues. Therefore the determinant cannot be arbitarily small. A detailed argument realising the above observations is presented below.
 
\textbf{Details for finding a lower bound for the left hand side of \eqref{ineq1}:} Since $\lambda-\lambda_k\geq\lambda_k$ for any $2\leq k\leq 8$, \eqref{**} implies that 
\begin{align}\label{case2_trace_ineq}
\tr\left(\sum_{k=2}^8C_k/(\lambda-\lambda_k)\right)\leq \tr\left(\sum_{k=2}^8C_k/\lambda_k\right)\leq \tr(C^*A^{-1}C)\leq \tr B=\mu.
\end{align}
Hence we set 
\begin{align}\label{beta_def}
\tr\left(\sum_{k=2}^8C_k/(\lambda-\lambda_k)\right)=\beta\mu
\end{align}
for some $0\leq\beta\leq 1$. On the other hand, since $\mu\cdot\id-B$ has norm bounded above by $\mu$ and $C_1$ has rank $1$, by \eqref{***} we have
$$\tr(C_1/(\lambda-\lambda_1))=\|C_1/(\lambda-\lambda_1)\|\leq\|C^*(\lambda\cdot\id-A)^{-1}C\|\leq\|\mu\cdot\id-B\|\leq \mu.$$
Therefore the matrix $\mu\cdot\id-B-(1-\alpha)^2C^*(\lambda\cdot\id-A)^{-1}C$ (introduced in \eqref{***}) has trace at least $[7-(1+\beta)(1-\alpha)^2]\mu$.
Notice that \eqref{***} implies
$$\mu\cdot\id-B-(1-\alpha)^2C^*(\lambda\cdot\id-A)^{-1}C\geq\mu\cdot\id-B-(1-\alpha)^2(\mu\cdot\id-B)=[1-(1-\alpha)^2](\mu\cdot\id-B).$$
Hence the smallest eigenvalue of $\mu\cdot\id-B-(1-\alpha)^2C^*(\lambda\cdot\id-A)^{-1}C$ is greater or equal to $[1-(1-\alpha)^2](\mu-\mu_1)$, where $\mu-\mu_1$ is the smallest eigenvalue of $\mu\cdot\id-B$.  We summarize these estimates on trace and the smallest eigenvalue for reader's convenience.
\begin{enumerate}
\item[\hypertarget{E1}{(E1)}.] Lower bound for trace: $\tr[\mu\cdot\id-B-(1-\alpha)^2C^*(\lambda\cdot\id-A)^{-1}C]\geq[7-(1+\beta)(1-\alpha)^2]\mu$, where $\beta\in[0,1]$ is given in \eqref{beta_def};
\item[\hypertarget{E2}{(E2)}.] Lower bound for the smallest eigenvalue: The smallest eigenvalue of $\tr[\mu\cdot\id-B-(1-\alpha)^2C^*(\lambda\cdot\id-A)^{-1}C]$ is greater or equal to $[1-(1-\alpha)^2](\mu-\mu_1)$.
\end{enumerate}

Since $\mu\cdot\id\geq\mu\cdot\id-B-(1-\alpha)^2C^*(\lambda\cdot\id-A)^{-1}C$, the norm of $\id-B-(1-\alpha)^2C^*(\lambda\cdot\id-A)^{-1}C$ is bounded above by $\mu$. Notice that
\begin{align*}
&5\mu+3[1-(1-\alpha)^2](\mu-\mu_1)\\
<& 5\mu+1.5[1-(1-\alpha)^2]\mu =[6.5-1.5(1-\alpha)^2]\mu\leq [7-2(1-\alpha)^2]\mu\leq[7-(1+\beta)(1-\alpha)^2]\mu.
\end{align*}
By \hyperlink{E1}{(E1)} and \hyperlink{E2}{(E2)}, this suggests that the sum of $3$ copies of the smallest eigenvalue and $5$ copies of the largest eigenvalue (which is bounded above by $\mu$) of $\mu\cdot\id-B-(1-\alpha)^2C^*(\lambda\cdot\id-A)^{-1}C$ is strictly smaller than its trace. Therefore $\mu\cdot\id-B-(1-\alpha)^2C^*(\lambda\cdot\id-A)^{-1}C$ admits at most $2$ eigenvalues equal to its smallest eigenvalue. The following  linear algebra lemma explains how to obtain a lower bound on its determinant given the above observation.
\begin{lem}\label{reverse_AM_GM_ineq_1}
Let $Q$ be an $8\times 8$ symmetric matrix with real entries such that its eigenvalues $q_1,...,q_8$ lie in $[m,M]$ for some $0<m\leq M$. Assume that there exists some constant $K$ such that $\tr(Q)\geq K>3m+5M$, then
$$\det(Q)\geq m^2(K-2m-5M)M^5.$$
\end{lem}
\begin{proof}[Proof of Lemma \ref{reverse_AM_GM_ineq_1}]
WLOG we assume $m\leq q_1\leq...\leq q_8\leq M$. We define a function $P(x_1,...,x_8)=x_1x_2...x_8$ with domain defined by $x_1+...+x_8=\tr(Q)$ and $m\leq x_1\leq...\leq x_8\leq M$. Assume that $P$ achieves global minimum at $(y_1,...,y_8)$ in its domain. Then there exists at most one $y_i\not\in\{m,M\}$. (This is because if there exists $1\leq i<j\leq 8$ such that $y_i,y_j\not\in\{m,M\}$, WLOG we can assume that $y_l=m$ for $l<i$ and $y_k=M$ for $k>j$. Then we can choose $0<\epsilon\ll 1$ such that $m<y_i-\epsilon<y_j+\epsilon<M$. It is not hard to verify that $(y_1,...,y_i-\epsilon,...,y_j+\epsilon,...,y_8)$ is still in the domain of $P(x_1,...,x_8)$ and $P(y_1,...,y_i-\epsilon,...,y_j+\epsilon,...,y_8)<P(y_1,...,y_8)$, which contradicts with the assumption on $(y_1,...,y_8)$. ) Since $\tr(Q)\geq K>3m+5M$, we have $y_4=...=y_8=M$.

\textbf{Case (1):} If $y_1=y_2=m$, then 
$$\det(Q)=P(q_1,...,q_8)\geq P(y_1,...,y_8)=m^2(\tr(Q)-2m-5M)M^5\geq m^2(K-2m-5M)M^5.$$

\textbf{Case (2):} If $y_2>m$ and $y_1=m$, then $m< \tr(Q)-m-6M\leq M$ and
$$\det(Q)\geq P(y_1,...,y_8)=m(\tr(Q)-m-6M)M^6>m^2(\tr(Q)-2m-5M)M^5\geq m^2(K-2m-5M)M^5.$$

\textbf{Case (3):} If $y_1>m$, then $m< \tr(Q)-7M\leq M$ and 
$$\det(Q)\geq P(y_1,...,y_8)=(\tr(Q)-7M)M^7>m^2(\tr(Q)-2m-5M)M^5\geq m^2(K-2m-5M)M^5.$$
Here the strict inequality follows from the fact that $\tr(Q)-m-6M\geq M$ and that
\begin{align*}(\tr(Q)-7M)M^2> m(\tr(Q)-m-6M)M>m^2(\tr(Q)-2m-5M). \tag*{\qedhere}\end{align*}
\end{proof}
Applying Lemma \ref{reverse_AM_GM_ineq_1} to the case when $Q=\mu\cdot\id-B-(1-\alpha)^2C^*(\lambda\cdot\id-A)^{-1}C$, $m=[1-(1-\alpha)^2](\mu-\mu_1)$, $M=\mu$ and $K=[7-(1+\beta)(1-\alpha)^2]\mu$, we have
\begin{align}\label{deno_est}
&\det(\mu\cdot\id-B-(1-\alpha)^2C^*(\lambda\cdot\id-A)^{-1}C) \nonumber\\
\geq&\{[1-(1-\alpha)^2](\mu-\mu_1)\}^2\cdot \{2\mu_1[1-(1-\alpha)^2]+(1-\beta)(1-\alpha)^2\mu\}\cdot \mu^5 \nonumber\\
=&[1-(1-\alpha)^2]^2(\mu-\mu_1)^2\{2\mu_1[1-(1-\alpha)^2]+(1-\alpha)^2\mu-\beta(1-\alpha)^2\mu\}\mu^5. 
\end{align}
It remains for us to prove that the lower bound in \eqref{deno_est} is greater equal to $\det(B-C^*A^{-1}C)$. We will do so by finding a suitabale upper bound for $\det(B-C^*A^{-1}C)$ (equivalently for the right hand side of \eqref{ineq1}). Similar to the arguments on finding a lower bound for the left hand side of \eqref{ineq1}, we first summarize the idea behind finding this upper bound estimate which is less complicated than the actual arguments. 

\textbf{Main idea to find an upper bound for the right hand side of \eqref{ineq1}:} Equivalently we want to find an upper bound for $\det(B-C^*A^{-1}C)$. Since we have assumed $\mu_1\geq\mu/2$ in this case, the possibilities of $B-C^*A^{-1}C$ having equal eigenvalues are very low when $C$ is close to $0$. Therefore directly applying the arithmetic mean-geometric mean inequality to the eigenvalues of $B-C^*A^{-1}C$ may overkill. We want to use the idea of the arithmetic mean-geometric mean inequality in a more delicate way. Namely, if trace of $B-C^*A^{-1}C$ is fixed, $\det(B-C^*A^{-1}C)$ becomes larger when its eigenvalues are closer to each other, even in the case when it is impossible for all eigenvalues to be the same. In particular, we noticed that if $C$ is close to $0$, the largest possible value of $\det(B-C^*A^{-1}C)$ as a function on its eigenvalues is achieved when $B$ and $B-C^*A^{-1}C$ have the same eigenvalues except the largest one. The main reason is that $\mu_1$ is way larger than other eigenvalues $\mu_j$ of $B$ in this case. The technical details of the above idea is presented below.

\textbf{Details for finding an upper bound for the right hand side of \eqref{ineq1}:} By \eqref{**}, \eqref{case2_trace_ineq} and \eqref{beta_def} we have $B-C^*A^{-1}C$ is positive semi-definite with trace smaller or equal to $(1-\beta)\mu$. Since $B\geq B-C^*A^{-1}C$, we can let $\mu_j-b_j$ be the $j$-th diagonal entry of $B-C^*A^{-1}C$. By Fact \ref{FACT} we know that the determinant of any positive semi-definite matrix is smaller or equal to the product of its diagonal elements. In particular, $\det(B-C^*A^{-1}C)\leq\prod_{j=1}^8(\mu_j-b_j)$.

Define 
$$E_\beta(z_1,...,z_8)=\prod_{j=1}^8\left(1-\frac{z_j}{\mu_j}\right),\quad, 0\leq z_j\leq \mu_j, \sum_{j=1}^8z_j\geq \beta\mu.$$
Then we have $0\leq \mu_j-b_j\leq \mu_j$ and $b_1+...+b_8\geq \beta\mu$ by \eqref{**}, \eqref{case2_trace_ineq} and \eqref{beta_def}. Hence $(b_1,...,b_8)$ is in the domain of $E_\beta$ and
$$\det(B-C^*A^{-1}C)\leq\det(B)E_\beta(b_1,...,b_8)\leq\det(B)\cdot\sup E_\beta\leq \mu_1\left(\frac{\mu-\mu_1}{7}\right)^7\sup E_\beta.$$
The following lemma provides a more delicate arithmetic mean-geometric mean estimate regarding $E_\beta$.  
\begin{lem}\label{fact2} Under the above assumptions, $E_\beta$ achieves maximum at a point $(\zeta_1,...,\zeta_8)$ satisfying the following additional conditions.
\begin{enumerate}
\item[(1).] $\zeta_1+...+\zeta_8=\beta\mu$;
\item[(2).] $\mu_1-\zeta_1\geq...\geq\mu_8-\zeta_8$;
\item[(3).] If $\zeta_j\neq 0$, then $\mu_1-\zeta_1=\mu_j-\zeta_j$.
\end{enumerate}
\end{lem}
\begin{rmk}
One way to understand these conditions is to vary $\beta$ from $0$ to $1$. When $\beta$ is small, $\zeta_j=0$ for $j\geq 2$ and hence $\zeta_1=\beta\mu$. $\zeta_2$ will not become positive until $\beta$ become too large that $\mu_1-\beta\mu\leq\mu_2$. Naively, if we think of $\mu_j$ as the ``pre-tax income of the $j$-th person'' and $\zeta_j$ as ``the tax he/she has to pay'', the process of finding $\sup E_\beta$ shows an example of ``the one who earns more pays more taxes'' with ``the least possible inequality in after-tax income''. This explains its similarity to the arithmetic mean-geometric mean inequality since we are maximizing $E_\beta$ by minimizing the differences among $\mu_j-\zeta_j$ under the given conditions. 
\end{rmk}
\begin{proof}[Proof of Lemma \ref{fact2}]
Since the domain of $E_\beta$ is compact, the global maximum can be achieved at some point $(\zeta_1,...,\zeta_8)$. We verify all three conditions by contradictions.
\begin{enumerate}
\item[(1).] If $\zeta_1+...+\zeta_8=:(1+\epsilon)\beta\mu>\beta\mu$ for some $\epsilon>0$, then $E_\beta(\zeta_1/(1+\epsilon),...,\zeta_8/(1+\epsilon))>E_\beta(\zeta_1,...,\zeta_8)$. This contradicts our assumption on $(\zeta_1,...,\zeta_8)$. Therefore the first condition must hold;
\item[(2).] If $\mu_i-\zeta_i<\mu_j-\zeta_j$ for some $i<j$, we define
$$\zeta'_k=\begin{cases}\displaystyle \zeta_k,~~&k\neq i,j;\\ \mu_k-\frac{\mu_i-\zeta_i+\mu_j-\zeta_j}{2} &k=i\mathrm{~or~}j.\displaystyle \end{cases}$$
Then we have $\mu_i>\mu_j\geq\mu_j-\zeta_j>\mu_i-\zeta'_i=\mu_j-\zeta'_j=(\mu_i-\zeta_i+\mu_j-\zeta_j)/2>0$. Hence $(\zeta'_1,...\zeta'_8)$ is in the domain of $E_\beta$ and $E_\beta(\zeta'_1,...\zeta'_8)>E_\beta(\zeta_1,...\zeta_8)$ by the arithmetic mean-geometric mean inequality applied to $(\mu_i-\zeta_i)(\mu_j-\zeta_j)$. This contradicts our assumption on $(\zeta_1,...,\zeta_8)$. Therefore the second condition must hold;
\item[(3).] If $\zeta_j> 0$ and $\mu_1-\zeta_1>\mu_j-\zeta_j$ for some $j\geq 2$, WLOG we can assume that $\mu_1-\zeta_1=\mu_k-\zeta_k$ for any $k<j$. Then we can choose some $0<\epsilon<\zeta_j$ such that $\mu_{j-1}-(\zeta_{j-1}+\epsilon)\geq\mu_j-(\zeta_j-\epsilon)$. Notice that
$[\mu_{j-1}-(\zeta_{j-1}+\epsilon)][\mu_j-(\zeta_j-\epsilon)]>(\mu_{j-1}-\zeta_{j-1})(\mu_j-\zeta_j)$, we have
$$E_\beta(\zeta_1,...,\zeta_{j-1}+\epsilon,\zeta_{j}-\epsilon,...,\zeta_8)>E_\beta(\zeta_1,...,\zeta_8).$$ This contradicts our assumption on $(\zeta_1,...,\zeta_8)$. Therefore the third condition must hold. \qedhere
\end{enumerate}
\end{proof}

Now we will give an upper bound for $\sup E_\beta$ given the above lemma. Notice that $\mu_1-\zeta_1\leq\sum_{j=2}^8\mu_j-\zeta_j$ if $(\zeta_2,...,\zeta_8)\neq 0$ and that $\mu_1\geq\sum_{j=2}^8\mu_j$. Therefore $\zeta_1\geq\beta\mu/2\geq\sum_{j=2}^8\zeta_j$ and
$$\sup E_\beta=\prod_{j=1}^8\frac{\mu_j-\zeta_j}{\mu_j}\leq\frac{\mu_1-\zeta_1}{\mu_1}\leq\frac{\mu_1-\frac{\beta\mu}{2}}{\mu_1}.$$
Hence
\begin{align}\label{num_est}
\det(B-C^*A^{-1}C)\leq \mu_1\left(\frac{\mu-\mu_1}{7}\right)^7\sup E_\beta\leq\left(\mu_1-\frac{\beta\mu}{2}\right)\left(\frac{\mu-\mu_1}{7}\right)^7.
\end{align}
We are now in a position to find conditions on $\alpha$ such that \eqref{ineq1} holds. Given \eqref{deno_est} and \eqref{num_est}, it suffices to find $\alpha$ such that
\begin{align}\label{upp_bd_est_*}
&[1-(1-\alpha)^2]^2(\mu-\mu_1)^2\{2\mu_1[1-(1-\alpha)^2]+(1-\alpha)^2\mu-\beta(1-\alpha)^2\mu\}\mu^5\nonumber \\
\geq&7\left(\mu_1-\frac{\beta\mu}{2}\right)(\mu-\mu_1)^7.
\end{align}
We want to apply a few additional requirements on $\alpha$ to slim down the expression. For simplicity we assume in addition that $\alpha\geq1-\sqrt{2/3}$. Then $(1-\alpha)^2\leq 2/3$ and $[1-(1-\alpha)^2]\geq 1/3$. Therefore
\begin{align*}
2\mu_1[1-(1-\alpha)^2]+(1-\alpha)^2\mu=&2\mu_1-(1-\alpha)^2(2\mu_1-\mu) \\
\geq&2\mu_1-(1-\alpha)^2\mu_1\geq2\mu_1-2\mu_1/3=4\mu_1/3\geq 2(1-\alpha)^2\mu_1.
\end{align*}
This implies that
\begin{align}\label{alpha1}
\frac{2\mu_1[1-(1-\alpha)^2]+(1-\alpha)^2\mu-\beta(1-\alpha)^2\mu}{\mu_1-\frac{\beta\mu}{2}}\geq 2(1-\alpha)^2,~~\forall~0\leq\beta\leq 1.
\end{align}
The idea behind assuming $\alpha\geq1-\sqrt{2/3}$ comes from some ``backward thinking''. We want to make \eqref{alpha1} hold in order to have a nicer expression to work with. Then this leads to the assumption that $\alpha\geq1-\sqrt{2/3}$. Let $\tau=(\mu-\mu_1)/\mu\in[0,1/2)$, then by \eqref{alpha1} applied to \eqref{upp_bd_est_*} it suffices to find an $\alpha\geq1-\sqrt{2/3}$ such that
$$[1-(1-\alpha)^2]^2\geq \frac{7\tau^5}{2(1-\alpha)^2}\geq\frac{\mu_1-\frac{\beta\mu}{2}}{2\mu_1[1-(1-\alpha)^2]+(1-\alpha)^2\mu-\beta(1-\alpha)^2\mu}\cdot 7\tau^5.$$
Equivalently we want to find an $\alpha\geq1-\sqrt{2/3}$ such that
$$[1-(1-\alpha)^2]^2(1-\alpha)^2=[1-(1-\alpha)^2]^2-[1-(1-\alpha)^2]^3\geq\frac{7}{2^6}.$$
Notice that the function $x(t):=t^2-t^3$ is increasing on $[0,2/3]$ and decreasing on $[2/3,1]$. Moreover, $x(1/3)=8/27>7/2^6$ and $x(3/4)=9/2^6>7/2^6$. Therefore we can choose $\alpha$ such that
$$\frac{1}{3}\leq [1-(1-\alpha)^2]\leq\frac{3}{4}.$$
This is equivalent to
\begin{align}
\label{alpha_lower_bd}\frac{1}{2}\geq\alpha\geq 1-\sqrt{\frac{2}{3}}.
\end{align}
Hence any $\alpha$ satisfying \eqref{alpha_lower_bd} also satisfies the first requirement of Lemma \ref{key_lemma} in \textbf{\hyperlink{case2}{Case 2}}.

It remains for us to find conditions for $\alpha$ satisfying the second requirement of Lemma \ref{key_lemma}.

\textit{Step 2: Find the conditions for $\alpha$ such that the second inequality in Lemma \ref{key_lemma} holds in} \textbf{\hyperlink{case2}{Case 2}}. 

Recall that in \cite[B.4 Lemme]{BCG1} Besson-Courtois-Gallot proved that for any $1/n\leq s\leq 1$ the function 
$$H\to\frac{\det(H)^s}{\det(\id-H)},\quad \tr(H)=1\mathrm{~and~} H\geq 0$$
defined on $n\times n$ trace 1 positive semi-definite matrices achieves global maximum when $H=\id/n$. This proves \eqref{old_part_1} as a corollary. In particular, the second inequality in Lemma \ref{key_lemma} (see also  \eqref{new_part_1}) holds when $\alpha=0$. However, $\alpha=0$ does not satisfy \eqref{alpha_lower_bd}. For non-zero $\alpha$, the second inequality in Lemma \ref{key_lemma} may still hold in \textbf{\hyperlink{case2}{Case 2}} if $\alpha$ satisfies the following properties. 
\begin{enumerate}
\item[(1).] $\det(\id-H-U_\alpha)$ is ``not too small'' compared with $\det(\id-H)$. The lower bound between their ratio is dependent on $\alpha$;
\item[(2).] $\det(H)^{4/15}/\det(\id-H)$ is ``much smaller'' than $16^{16(1-4/15)}\cdot 15^{-16}=\det(\id/16)^{4/15}/\det(15\id/16)$ since $H$ differs a lot from $\id/16$ by the assumption of \textbf{\hyperlink{case2}{Case 2}}.  In particular, the upper bound between their ratio should be smaller than the ratio in the first property.
\end{enumerate}
To be more specific, we first rewrite the second inequality in Lemma \ref{key_lemma} as
\begin{align}\label{rewrite_new_part_1}\frac{\det(H)^{4/15}}{\det(\id-H-U_\alpha)}=\frac{\det(H)^{4/15}}{\det(\id-H)}\frac{\det(\id-H)}{\det(\id-H-U_\alpha)}\leq\frac{16^{16(1-4/15)}}{15^{16}}.
\end{align}
Based on \eqref{rewrite_new_part_1}, the main idea to find conditions on $\alpha$ is to find upper bounds $K_1(\alpha), K_2$ for $\det(\id-H)/\det(\id-H-U_\alpha)$ and $\det(H)^{4/15}/\det(\id-H)$ respectively. Then the conditions on $\alpha$ will be given in the form of \begin{align}\label{goal_of_step_3}K_1(\alpha)K_2\leq \frac{16^{16(1-4/15)}}{ 15^{16}}=\frac{\det(\id/16)^{4/15}}{\det(15\id/16)}.\end{align}

\textbf{Finding an upper bound $K_1(\alpha)$ for $\det(\id-H)/\det(\id-H-U_\alpha)$:} Since both $H$ and $\widehat H-H$ are positive semi-definite, by their matrix forms in \eqref{matrix_formulation} we have
$$\matii{A&C}{C^*&B},\matii{\lambda\cdot\id-A&-C}{-C^*&\mu\cdot\id-B},\matii{A&-C}{-C^*&B},\matii{\lambda\cdot\id-A&C}{C^*&\mu\cdot\id-B}$$
are positive semi-definite.
Therefore
$$\matii{\lambda\cdot\id& 2C}{2C^*&\mu\cdot\id}\geq \matii{A&C}{C^*&B}\geq 0,$$
Notice from Fact \ref{FACT} that 
$$\matii{\lambda\cdot\id& 2C}{2C^*&\mu\cdot\id}=\matii{\id&0}{2C^*/\lambda&\id}\matii{\lambda\cdot\id&0}{0&\mu\cdot\id-4\lambda^{-1}C^*C}\matii{\id&2C/\lambda}{0&\id},$$
We have $\lambda\mu\cdot\id\geq 4C^*C$, which implies that
$$\matii{\mu\cdot\id& -2C}{-2C^*&\lambda\cdot\id}=\matii{\id&0}{-2C^*/\mu&\id}\matii{\mu\cdot\id&0}{0&\lambda\cdot\id-4\mu^{-1}C^*C}\matii{\id&-2C/\mu}{0&\id}$$
is positive semi-definite. This implies that
\begin{align}\label{pos_def_mat_2}
\id-H-2U=\matii{\id-A&-3C}{-3C^*&\id-B}=\matii{\mu\cdot\id& -2C}{-2C^*&\lambda\cdot\id}+\matii{\lambda\cdot\id-A&-C}{-C^*&\mu\cdot\id-B}
\end{align}
is positive semi-definite. In particular,
$$\id-B\geq9C^*(\id-A)^{-1}C.$$
By Fact \ref{FACT} and \eqref{pos_def_mat_2} we have
\begin{align*}
&\det(\id-H-U_\alpha)\\
=&\det(\id-A)\det(\id-B-(1+\alpha)^2C^*(\id-A)^{-1}C)\\
\geq&\det(\id-A)\det\left[\frac{9-(1+\alpha)^2}{8}\cdot\left(\id-B-C^*(\id-A)^{-1}C\right)\right]\\
\geq&\left[\frac{9-(1+\alpha)^2}{8}\right]^8\det(\id-A)\det(\id-B-C^*(\id-A)^{-1}C)\\
=&\left[\frac{9-(1+\alpha)^2}{8}\right]^8\det(\id-H).
\end{align*}
Hence
\begin{align}\label{K_1}
\frac{\det(\id-H)}{\det(\id-H-U_\alpha)}\leq K_1(\alpha):=\left[\frac{9-(1+\alpha)^2}{8}\right]^{-8}.
\end{align}

\textbf{Finding an upper bound $K_2$ for $\det(H)^{4/15}/\det(\id-H)$:} Denote by $\nu_1\geq...\geq\nu_{16}\geq 0$ the eigenvalues of $H$. By the assumptions of \textbf{\hyperlink{case2}{Case 2}}, we have 
\begin{align}\label{7_ineq}
\nu_1\geq\mu_1\geq\mu-\mu_1=\sum_{j=2}^8\mu_j\geq\sum_{j=1}^7\nu_{16-j+1}.\end{align}
Define
$$F(x_1,...,x_{16})=\prod_{j=1}^{16}\frac{x_j^{4/15}}{1-x_j},\quad \sum_{j=1}^{16}x_j=1\mathrm{~and~}x_j\geq 0.$$
Then we have
\begin{align}\label{F(eigen)}F(\nu_1,...,\nu_{16})=\frac{\det(H)^{4/15}}{\det(\id-H)}.\end{align}
Define $f(t)=-\ln(1-t)+4\ln(t)/15$ with $t\in(0,1)$. Then $\ln F(x_1,...,x_{16})=\sum_{j=1}^{16}f(x_j)$. Direct computations yield
$$f'(t)=\frac{4}{15t}+\frac{1}{1-t},\quad f''(t)=-\frac{4}{15t^2}+\frac{1}{(1-t)^2}.$$
Therefore $f(t)$ is concave down on $\left(0,\sqrt{4/15}/\left(1+\sqrt{4/15}\right)\right)$ and concave up on $(\sqrt{4/15}/(1+\sqrt{4/15}),1)$. Since $\sqrt{4/15}/\left(1+\sqrt{4/15}\right)>1/3$ and $\nu_1+...+\nu_{16}=\tr(H)=1$, we have $\nu_j<\sqrt{4/15}/\left(1+\sqrt{4/15}\right)$ for any $3\leq j\leq 16$. Therefore we can use concavity arguments to find an upper bound for $F(\nu_1,...,\nu_{16})$.
\begin{lem}\label{F_Gmax}
Under the above assumptions on $F$ and $f$, there exist some $\nu_1'\geq...\geq\nu_{16}'$ such that
$$F(\nu_1,...,\nu_{16})\leq F(\nu_1',...,\nu_{16}').$$
Moreover, $(\nu_1',...,\nu_{16}')$ satsifies the following conditions 
\begin{enumerate}
\item[(1).] $\nu_2'=...=\nu_9'$ and $\nu_{10}'=...=\nu_{16}'$;
\item[(2).] $\nu_1'=\max\left\{\nu_1,\nu_1+\nu_2-\frac{\sqrt{4/15}}{1+\sqrt{4/15}}\right\}\geq\sum_{j=1}^7\nu_{16-j+1}'\geq\sum_{j=1}^7\nu_{16-j+1}$;
\item[(3).] If the first inequality in the second condition is strict, then $\nu_2'=...=\nu_{16}'$;
\item[(4).] If $\nu_2'>\nu_{16}$, then $\nu_1'=\sum_{j=1}^7\nu_{16-j+1}'$;
\item[(5).] $\sum_{j=1}^{16}\nu_j'=1$.
\end{enumerate}
\end{lem}
\begin{proof}[Proof of Lemma \ref{F_Gmax}]
The main idea behind this inequality is to apply a few adjustments to the original $\nu_j$'s so that the value of $F$ goes up. We first use the fact that $f(x)$ is concave up on $(\sqrt{4/15}/(1+\sqrt{4/15}),1)$ to obtain
$$f(\nu_1)+f(\nu_2)\leq f(\nu_1')+f(\nu_1+\nu_2-\nu_1'),$$
where $\nu_1'=\max\left\{\nu_1,\nu_1+\nu_2-\frac{\sqrt{4/15}}{1+\sqrt{4/15}}\right\}$ and $\nu_1+\nu_2-\nu_1'=\min\left\{\nu_2,\frac{\sqrt{4/15}}{1+\sqrt{4/15}}\right\}$. Notice that
$$\ln F(\nu_1,...\nu_{16})=\sum_{j=1}^{16}f(\nu_j).$$
Therefore
\begin{align}\label{step1_ineq}
F(\nu_1,...,\nu_{16})\leq F(\nu_1',\nu_1+\nu_2-\nu_1',\nu_3,...,\nu_{16}).\end{align}
Let $x'=(\nu_{10}+...+\nu_{16})/7\leq \sqrt{4/15}/(1+\sqrt{4/15})$ and $y'=[(\nu_1+\nu_2-\nu_1')+\nu_3...+\nu_9]/8\leq \sqrt{4/15}/(1+\sqrt{4/15})$. It is clear from the ordering of $\nu_j$ that $y'\geq x'$. Since $\sqrt{4/15}/(1+\sqrt{4/15})\geq (\nu_1+\nu_2-\nu_1')\geq \nu_3\geq...\geq\nu_{16}$, by the fact that $f(x)$ is concave down on $(0,\sqrt{4/15}/(1+\sqrt{4/15}))$, we have
$$f(\nu_1+\nu_2-\nu_1')+f(\nu_3)+...+f(\nu_9)\leq 8f(y')$$
and
$$f(\nu_{10})+...+f(\nu_{16})\leq 7f(x').$$
These two inequalities imply that
\begin{align}\label{step2_ineq}
F(\nu_1',\nu_1+\nu_2-\nu_1',\nu_3,...,\nu_{16})\leq F(\nu_1',y',...,y',x'...,x'),
\end{align}
where $y'$ shows up $8$ times and $x'$ shows up $7$ times.

\textbf{Case (1):} If $\nu_1'\geq 7(8y'+7x')/15$, then by the fact that $f(x)$ is concave down on $(0,\sqrt{4/15}/(1+\sqrt{4/15}))$, we have
$$8f(y')+7f(x')\leq 15f((8y'+7x')/15).$$
Choose $\nu_2'=...'\nu_{16}'=(8y'+7x')/15$. Then
\begin{align}\label{step3_ineq1}
F(\nu_1',y',...,y',x'...,x')\leq F(\nu_1',...,\nu_{16}').
\end{align}
Lemma \ref{F_Gmax} then follows from \eqref{step1_ineq}, \eqref{step2_ineq} and \eqref{step3_ineq1}.

\textbf{Case (2):} If $\nu_1'< 7(8y'+7x')/15$, we let $x=\nu_1'/7$ and $y=(1-2\nu_1)/8=(8y'+7x'-7x)/8>x$. Clearly $8y'+7x'=8y+7x$. Since $\nu_1'\geq\nu_1\geq\nu_{10}+...+\nu_{16}=7x'$, we have $y'\geq y>x\geq x'$. By the fact that $f(x)$ is concave down on $(0,\sqrt{4/15}/(1+\sqrt{4/15}))$, we have
$$8f(y')+7f(x')\leq 8f(y)+7f(x).$$
Choose $\nu_2'=...=\nu_9'=y$ and $\nu_{10}'=...=\nu_{16}'=x$, we have
\begin{align}\label{step3_ineq2}
F(\nu_1',y',...,y',x'...,x')\leq F(\nu_1',...,\nu_{16}').
\end{align}
Lemma \ref{F_Gmax} then follows from \eqref{step1_ineq}, \eqref{step2_ineq} and \eqref{step3_ineq2}.
\end{proof}
Back to finding an upper bound $K_2$ for $\det(H)^{4/15}/\det(\id-H)=F(\nu_1,...,\nu_{16})$. Let $\nu_1'\geq\nu_2'=...=\nu_9'=y\geq\nu_{10}'=...=\nu_{16}'=x$ be the same as in Lemma \ref{F_Gmax}. Then we have $\nu_1'\geq7x$ and $y\geq x$. Moreover, $\nu_1'>7x$ implies that $y=x$.

\textbf{Case 2a}: $\nu_1'=7x$. Define 
$$L(x)=\frac{(7x^8)^{4/15}}{(1-x)^7(1-7x)}\cdot\left[\frac{(14x/8)^{32/15}}{(1-14x/8)^8}\right]^{-1},~~0\leq x\leq \frac{1}{22}.$$
Then by \cite[B.4 Lemme]{BCG1} we have
$$F(7x,y,...,y,x,...,x)=L(x)F(14x/8,y,...,y,14x/8,...,14x/8)\leq L(x)F(\id/16),$$
where $y$ shows up $8$ times. Notice that
$$\left(\ln L(x)\right)'=\frac{7}{1-7x}+\frac{7}{1-x}-\frac{14}{(1-14x/8)}\geq 0.$$
Therefore by Lemma \ref{F_Gmax} and \eqref{F(eigen)} we have
\begin{align}\label{K_2}\det(H)^{4/15}/\det(\id-H)=F(\nu_1,...,\nu_{16})\leq F(\nu_1',...,\nu_{16}')\leq& L(1/22)F(1/16,...,1/16) \nonumber\\=&L(1/22)\cdot\frac{16^{16(1-4/15)}}{15^{16}}=:K_2.\end{align}

\textbf{Case 2b}: $\nu_1>7x$ and therefore $y=x$. Let 
$$R(x)=F(1-15x,x,...,x)=\frac{(1-15x)^{4/15}x^{4}}{15x(1-x)^{15}},~~0\leq x<\frac{1}{22}.$$
Then
$$(\ln R(x))'=-\frac{4}{1-15x}+\frac{3}{x}+\frac{15}{1-x}>0.$$
Therefore $R(x)$ is increasing and $F(1-15x,x,...,x)\leq F(7/22,1/22,...,1/22)\leq L(1/22)F(1/16,...,1/16)$. Hence this case reduces to Case 2a.

By \eqref{goal_of_step_3}, \eqref{K_1} and \eqref{K_2} it suffices to find an $\alpha$ satisfying \eqref{alpha_lower_bd} and
$$\left[\frac{9-(1+\alpha)^2}{8}\right]^{-8}\cdot L(1/22)\leq 1.$$
The above inequality is equivalent to
\begin{align}\label{alpha_upper_bd}
\alpha\leq-1+\sqrt{9-8L(1/22)^{1/8}}\in [0.266,0.267],
\end{align}
which gives a sufficient condition on $\alpha$ such that the second requirement in Lemma \ref{key_lemma} holds in \textbf{\hyperlink{case2}{Case 2}}.

Notice that the lower bound in \eqref{alpha_lower_bd} is smaller than $0.2$ and the smallest upper bound for $\alpha $ is greater than $0.26$. We can choose $\alpha=1/4$ and Lemma \ref{key_lemma} follows in \hyperlink{case2}{\textbf{Case 2}}. 
\end{proof}
\begin{rmk}
\begin{enumerate}
\item[(1).] In fact, the equality of the second inequality in Lemma \ref{key_lemma} can not be achieved in \hyperlink{case2}{\textbf{Case 2}} since \eqref{alpha_upper_bd} is strict when $\alpha=1/4$. To be specific, \eqref{K_1}, \eqref{K_2} and \eqref{alpha_upper_bd} imply that $0<K_1(1/4)K_2<1$. Therefore by \eqref{rewrite_new_part_1} and \eqref{goal_of_step_3} we have
\begin{align*}
\det(\id-H-U_\alpha)\geq\frac{15^{16}}{K_1(1/4)K_216^{16(1-4/15)}}\det(H)^{4/15}>\frac{15^{16}}{16^{16(1-4/15)}}\det(H)^{4/15}.
\end{align*}
Since the first inequality in Lemma \ref{key_lemma} holds in \hyperlink{case2}{\textbf{Case 2}} when $\alpha=1/4$, it follows from \eqref{new_trick} and the above inequality that
\begin{align}\label{strict_ineq}
\det(\id-H+(\widehat{H}-H))&\geq22^{16}\det((\id-H-U_\alpha)/15)^{15/22}\det((\widehat{H}-H+U_\alpha)/7)^{7/22} \nonumber\\
&\geq 22^{16}16^{-8}K_1(1/4)^{-15/22}K_2^{-15/22}\det(H)^{4/22+7/22} \nonumber\\
&>22^{16}16^{-8}\det(H)^{1/2}.
\end{align}
This shows that the equality in \eqref{simplification} (and equivalently Proposition \ref{ineq_prop}) cannot be achieved under the assumptions of \hyperlink{case2}{\textbf{Case 2}}. 
\item[(2).] The original proof of \cite{BCG1} actually proved a stronger inequality in the sense that the power of $\det(H)$ in the numerator can be reduced to $d/(n+d-2)$, where $d=1,2,4$ in the real, complex and quaternionic setting respectively and $n$ denotes the dimension of the symmetric space. The estimates in our proof are clearly not optimal since we can choose $\alpha$ from the interval $[0.2,0.26]$. Using the same method we can reduce the power of $\det(H)$ in the numerator to $-\epsilon+1/2$ for some small $\epsilon>0$, which is sufficient to prove entropy rigidity in the Cayley hyperbolic setting but weaker than the corresponding inequality in \cite{BCG1}. We suspect that in order to prove the stronger inequality as in \cite{BCG1}, we need some stronger estimates on $\det(\widehat{H}-H)$ or some new ideas.
\end{enumerate}
\end{rmk}

\subsection{Repairing the gap: proof of \eqref{sharp_det_est}}\label{s6}
Following the notations in previous subsections, we have the following analogue of \cite[B.5 Proposition]{BCG1}, which proves \eqref{sharp_det_est} in the Cayley hyperbolic setting.
\begin{cor}\label{key_cor}
Let $H$ and $\widehat{H}$ be as in Proposition \ref{ineq_prop}. Then there exists a constant $K$ such that
\begin{align}\label{det_error}
\frac{\left(\det{H}\right)^{1/2}}{\det\left(\id-2H+\widehat{H}\right)}\leq \left(\frac{\sqrt{16}}{16+8-2}\right)^{16}\left(1-K\sum_{j=1}^{16}\left(\nu_i-\frac{1}{16}\right)^2\right),
\end{align}
where $\nu_1,...,\nu_{16}$ denote the eigenvalues of $H$.
\end{cor}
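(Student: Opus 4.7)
The plan is to refine Proposition~\ref{ineq_prop} by extracting an additive slack proportional to $\sum_{j=1}^{16}(\nu_j-1/16)^2$ from each step of the proof of Lemma~\ref{key_lemma}, paralleling what \cite[B.5 Proposition]{BCG1} does in the complex and quaternionic hyperbolic settings. I would keep the same dichotomy as in Lemma~\ref{key_lemma}.

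The \hyperlink{case2}{\textbf{Case 2}} branch is essentially free: the strict chain \eqref{strict_ineq} already produces a uniform multiplicative gap. Setting $c_2:=1-(K_1(1/4)K_2)^{15/22}\in(0,1)$, the inequality \eqref{strict_ineq} rewrites as
$$
\frac{(\det H)^{1/2}}{\det(\id-2H+\widehat H)}\le(1-c_2)\left(\frac{4}{22}\right)^{16}
$$
throughout Case~2. Since $\tr H=1$ and $\nu_j\in[0,1]$ force $\sum_j(\nu_j-1/16)^2\le 15/16$, \eqref{det_error} then follows in Case~2 for any $0<K\le 16c_2/15$.

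In \textbf{Case~1} the real work lies. A direct computation using \eqref{matrix_formulation}, $\tr H=1$, and $\lambda+\mu=1$ yields the identity
$$
\sum_{j=1}^{16}\Bigl(\nu_j-\tfrac{1}{16}\Bigr)^2=\sum_{i=1}^{8}\Bigl(\lambda_i-\tfrac{\lambda}{8}\Bigr)^2+\sum_{j=1}^{8}\Bigl(\mu_j-\tfrac{\mu}{8}\Bigr)^2+2\|C\|_F^2+\tfrac{(\lambda-\mu)^2}{16},
$$
so it is enough to produce multiplicative slack proportional to each of the four non-negative contributions on the right. For the eigenvalue sums I would replace the AM-GM estimates \eqref{am_gm_good} by their quantitative forms $\det(\lambda\id-A)\ge 7^8\det(A)\bigl(1+c_3\sum_i(\lambda_i-\lambda/8)^2\bigr)$, and analogously for $B$. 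For $\|C\|_F^2$, Lemma~\ref{easy_lin_alg} applied with $W=C^*A^{-1}C$ becomes an equality exactly when $W=0$, i.e.\ when $C=0$; a Taylor expansion of \eqref{half_done_ineq_1} near $C=0$ extracts a slack of order $\|C\|_F^2$. Finally, the $(\lambda-\mu)^2$ contribution is captured by a quantitative version of the log-concavity inequality \eqref{old_trick}: equality there requires $(\id-H)/15=(\widehat H-H)/7$, which (together with the previous two rigidity constraints) in particular forces $\lambda=\mu$.

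The main obstacle will be the bookkeeping. In Case~1 the parameters $\lambda,\mu,A,B,C$ range over a bounded but not a priori compact set (for instance $\lambda,\mu$ could be small so that $\id-H$ and $\widehat H-H$ approach the boundary of positive semi-definiteness), so care is required to verify that the constants in each quantitative refinement degrade no faster than polynomially as one approaches the degenerate regime, and that the three sources of slack can be combined so that the resulting lower bound dominates a uniform positive multiple of $\sum_j(\nu_j-1/16)^2$. Beyond this, no new idea beyond the template of \cite[B.5 Proposition]{BCG1} is needed, and the Cayley-specific modifications made in Subsection~\ref{s5} remain compatible with the refinement.
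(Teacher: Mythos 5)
Your Case 2 argument coincides with the paper's: the strict chain \eqref{strict_ineq} supplies a uniform multiplicative gap independent of $H$, and the crude bound $\sum_j(\nu_j-1/16)^2<16$ (you sharpen this to $\leq 15/16$, which is harmless) absorbs the variance factor. That part is fine.

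Your Case 1, however, misses the short path and, as you partly acknowledge, is not actually closed. The paper's Case 1 is a one-line reduction: combining \eqref{old_trick} (i.e.\ \eqref{new_trick} with $\alpha=0$) and \eqref{good_case} gives
\[
\frac{\det(H)^{1/2}}{\det(\id-2H+\widehat H)} \;\leq\; \frac{15^{120/11}}{22^{16}}\left(\frac{\det(H)^{4/15}}{\det(\id-H)}\right)^{15/22},
\]
and the parenthesized ratio is exactly the quantity treated in \cite[Proposition B.5]{BCG1} with $n=16$, $s=4/15$, which already contains the variance correction $1-K'\sum_j(\nu_j-1/16)^2$; pushing the exponent $15/22$ through via $(1-x)^{15/22}\leq 1-\tfrac{15}{22}x$ gives \eqref{det_error}. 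No further analysis of $A,B,C$ is needed. Your alternative plan — decomposing the variance via the (correct) identity $\sum_j(\nu_j-\tfrac{1}{16})^2=\sum_i(\lambda_i-\tfrac{\lambda}{8})^2+\sum_j(\mu_j-\tfrac{\mu}{8})^2+2\tr(C^*C)+\tfrac{(\lambda-\mu)^2}{16}$ and extracting slack from each piece — runs into the exact obstacle you flag, and it is a genuine one: the refined AM-GM estimate $\det(\lambda\id-A)\geq 7^8\det(A)\bigl(1+c_3\sum_i(\lambda_i-\lambda/8)^2\bigr)$ cannot hold with a $\lambda$-independent $c_3$, because $\det(\lambda\id-A)/\det(A)$ is invariant under the simultaneous rescaling $\lambda_i\mapsto t\lambda_i$, $\lambda\mapsto t\lambda$, so the actual second-order excess is proportional to $\sum_i(\lambda_i-\lambda/8)^2/\lambda^2$ and $c_3$ degenerates as $\lambda\to 0$; a similar issue afflicts the proposed Taylor expansion in $C$. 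The resolution is not to re-derive a variance bound in the block coordinates at all, but to notice that the compact constraint set $\{H\geq 0,\ \tr H=1\}$ is precisely the domain of \cite[Proposition B.5]{BCG1}, which can be quoted verbatim once the reduction above is made.
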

\begin{proof}
Recall that in Proposition \ref{ineq_prop} we proved that under a suitable basis one can write
$$H=\matii{A&C}{C^*&B}\mathrm{~and~}\widehat{H}=\matii{\lambda\cdot\id&0}{0&\mu\cdot\id},$$
where $A, B$ are $8\times 8$ diagonal matrices with trace equal to $\lambda,\mu\geq 0$ respectively such that $\lambda+\mu=1$. Moreover,
$$A=\matiii{\lambda_1&&}{&\ddots&}{&&\lambda_8}\mathrm{~and~}B=\matiii{\mu_1&&}{&\ddots&}{&&\mu_8},$$
where $\lambda_1\geq...\geq\lambda_8\geq0$ and $\mu_1\geq...\geq\mu_8\geq0$ are eigenvalues of $A$ and $B$ respectively. Similar to the proof of Proposition \ref{ineq_prop}, we consider two separate cases.

\textbf{Case 1.} If $2\lambda_1\leq\lambda$ and $2\mu_1\leq\mu$, then \eqref{old_trick} and \eqref{good_case} suggest that it suffices to prove
$$\frac{\det\left(H\right)^{1/2-7/22}}{\det(I-H)^{15/22}}\leq \frac{16^8}{(15^{16})^{15/22}}\left(1-K\sum_{j=1}^{16}\left(\nu_i-\frac{1}{16}\right)^2\right)$$
for some constant $K$. This follows directly from the proof of \cite[Proposition B.5]{BCG1}.

\textbf{Case 2.} Under the assumptions of \hyperlink{case2}{\textbf{Case 2}} in the proof for Lemma \ref{key_lemma}, the left hand side of \eqref{simplification} has a strictly smaller upper bound comparing to its right hand side. This follows from \eqref{strict_ineq} in the remark of Lemma \ref{key_lemma}. If $2\lambda_1>\lambda$ or $2\mu_1>\mu$, then the corollary directly follows from the fact that $\sum_{j=1}^{16}\left(\nu_i-\frac{1}{16}\right)^2<16$.
\end{proof}
\begin{rmk}
Equivalently, Subsections \ref{s4}-\ref{s6} guarantees that all results in \cite[Appendice B]{BCG1} are correct in the Cayley hyperbolic setting.
\end{rmk}

\Addresses
\end{document}